\newtheorem{theorem}{Theorem}[section]      
\newtheorem{lemma}[theorem]{Lemma}
\newtheorem{corollary}[theorem]{Corollary}
\newtheorem{proposition}[theorem]{Proposition}
\newtheorem*{main theorem}{Main Theorem}     
\newtheorem*{thmA}{Theorem A}
\theoremstyle{remark}     
\theoremstyle{definition}  
\newtheorem*{definition}{Definition}   
\def\T{\mathbb{T}}         
\def\N{\mathbb{N}}
\def\R{\mathbb{R}}     
\def\Z{\mathbb{Z}}
\def\C{\mathbb{C}}  
\def\P{\mathcal{P}}
\def\bar#1{\overline{#1}} 
\def\norm#1{\|#1\|}
\begin{document}
\title{Binary Quadratic Forms in Difference Sets} 
\author{Alex Rice}

\address{Department of Mathematics, Millsaps College, Jackson, MS 39210}
\email{riceaj@millsaps.edu} 
\subjclass[2000]{11B30}
\begin{abstract} We show that if $h(x,y)=ax^2+bxy+cy^2\in \Z[x,y]$ satisfies $\Delta(h)=b^2-4ac\neq 0,$ then any subset of $\{1,2,\dots,N\}$ lacking nonzero differences in the image of $h$ has size at most a constant depending on $h$ times $N\exp(-c\sqrt{\log N})$, where $c=c(h)>0$. We achieve this goal by adapting an $L^2$ density increment strategy previously used to establish analogous results for sums of one or more single-variable polynomials. Our exposition is thorough and self-contained, in order to serve as an accessible gateway for readers who are unfamiliar with previous implementations of these techniques. 
\end{abstract}
\maketitle 
\setlength{\parskip}{5pt}   

\section{Introduction} 

Established independently by S\'ark\"ozy and Furstenberg during the 1970s, settling a question of Lov\'asz, it is a well-studied fact that any set of integers of positive upper density necessarily contains two distinct elements that differ by a perfect square. Equivalently, if $A\subseteq \N$ contains no such pair, then $$\lim_{N\to \infty} \frac{|A\cap[1,N]|}{N}=0.$$ Here we use $[1,N]$ to denote $\{1,2,\dots, N\}$ and $|X|$ to denote the size of a finite set $X$. Furstenberg \cite{Furst} achieved this result qualitatively via ergodic theory, specifically his correspondence principle, but obtained no information on the rate at which the density must decay, while S\'ark\"ozy \cite{Sark1} employed a Fourier analytic density increment strategy to show that if $A\subseteq [1,N]$ has no square differences, then \begin{equation}\label{sark1bound} \frac{|A|}{N} \ll \left(\frac{(\log \log N)^2}{\log N}\right)^{1/3}.\end{equation} Throughout the paper we use $\log$ to denote the natural logarithm, and we use ``$\ll$" to denote ``less than a constant times", with subscripts indicating on what parameters, if any, the implied constant depends. S\'ark\"ozy's argument was driven by the Hardy-Littlewood circle method, and was inspired by Roth's \cite{Roth} proof that sets of positive upper density contain three-term arithmetic progressions.

Using a more intricate Fourier analytic argument, Pintz, Steiger, and Szemer\'edi \cite{PSS} improved (\ref{sark1bound}) to \begin{equation}\label{PSSbound} |A| \ll N(\log N)^{-c\log\log\log\log N}, \end{equation} with $c=1/12$. While more elementary Fourier analytic proofs \cite{Green,Lyall} and a Fourier-free density increment proof \cite{taoblog} have also been discovered, it is versions of these two Fourier analytic attacks that have yielded the best quantitative information. In the ensuing decades, these two methods have been refined and applied to other sets of prohibited differences, such as more general polynomial images \cite{BPPS,Slip,Lucier,HLR}, shifted primes \cite{Sark3,Lucier2,Ruz}, polynomial curves in higher-dimensional integer lattices \cite{LM}, and images of the primes under polynomials \cite{lipan,Rice2}. 

With regard to sums of one or more single-variable polynomials, the author \cite{Rice} pushed these two methods to their breaking points. In the case of one single-variable polynomial, if $h\in \Z[x]$ has degree $k\geq 2$ and $h(\N)$ contains a multiple of $q$ for every $q\in \N$, known as an \textit{intersective polynomial}, then any set $A\subseteq [1,N]$ with no nonzero differences in the image of $h$ satisfies (\ref{PSSbound})  for any $c<(\log((k^2+k)/2))^{-1}$, with the implied constant depending on $h$ and $c$. The intersective condition is necessary to force any density decay, as otherwise one can take $A=q\N$ if $h(\N)$ misses $q\Z$, and in that sense this is a maximal extension of the elaborate techniques developed in \cite{PSS} and \cite{BPPS}. 

Further, if we allow the additional degree of freedom of a second polynomial, then the more straightforward density increment approach yields density bounds that are even better than (\ref{PSSbound}), as described below.

\begin{thmA}[\cite{Rice}]Suppose $g, h \in \Z[x]$ are nonzero intersective polynomials and $A\subseteq [1,N]$. If $$a-a'\neq g(m)+h(n)$$ for all distinct pairs $a,a'\in A$ and all $m,n \in \N$, then \begin{equation*}|A| \ll_{g,h} Ne^{-c(\log N)^{\mu}},  \end{equation*} where $c=c(g,h)>0$, $\mu=\mu(\deg(g),\deg(h))>0$, and $\mu(2,2)=1/2$.
\end{thmA}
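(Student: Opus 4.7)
The plan is to run an $L^2$ density increment driven by the Hardy--Littlewood circle method. First I would use the intersective hypothesis on $g$ and $h$ to pass to a normalized setting where each polynomial vanishes at $0$: a standard lifting argument selects a modulus $W$ and residues $r,s$ so that $G(m)=g(Wm+r)/W^{\deg g}$ and $H(n)=h(Wn+s)/W^{\deg h}$ remain integer-valued intersective polynomials satisfying $G(0)=H(0)=0$. After passing $A$ to a subprogression on which this reduction is legitimate, fix $A\subseteq[1,N]$ with $|A|=\delta N$ and assume $A-A$ contains no nonzero element of the image of $(m,n)\mapsto G(m)+H(n)$. With $M_g=(N/2)^{1/\deg G}$, $M_h=(N/2)^{1/\deg H}$, $S_g(\alpha)=\sum_{m\leq M_g}e(G(m)\alpha)$, and $S_h$ defined analogously, the hypothesis forces the integral
$$\int_0^1\abs{\hat 1_A(\alpha)}^2 S_g(\alpha)\overline{S_h(\alpha)}\,d\alpha$$
to be bounded by its trivial $m=n=0$ contribution of size $O(\delta N)$.

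Next I would decompose the circle into major arcs $\mathfrak{M}$ around rationals $a/q$ with $q\leq Q$ and minor arcs $\mathfrak{m}$. Classical asymptotics give that on $\mathfrak{M}$ each of $S_g,S_h$ factors as a Gauss-type sum times a smooth oscillatory integral, and under the intersective normalization the local singular series is positive, so the major-arc contribution above, if $A$ is Fourier-quasirandom, is of order $\delta^2 M_g M_h$. The crucial gain relative to the single-polynomial case is on $\mathfrak{m}$: one applies a Weyl-type bound to one factor (say $S_h$) while absorbing the other against $\abs{\hat 1_A}^2$, producing a bound of the form $\int_\mathfrak{m}\abs{\hat 1_A}^2\abs{S_gS_h}\ll \delta N\cdot M_gM_h^{1-\tau}$ for some $\tau=\tau(\deg g,\deg h)>0$. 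Combining these, unless $\delta\ll M_h^{-\tau}$ the hypothesis forces $|\hat 1_A(a/q)|\gg\delta(M_gM_h)^{1/2}$ at some $a/q$ with $q\leq Q\leq\delta^{-O(1)}$, which via standard pigeonholing on short progressions modulo $q$ yields a subprogression $P\subseteq[1,N]$ of length $\gg q^{-O(1)}N$ with $|A\cap P|/|P|\geq\delta(1+c)$.

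Iterating, each step gives a multiplicative density gain $(1+c)$ at the cost $\log|P|\to\log|P|-O(\log\delta^{-1})$, and since the density cannot exceed $1$ the iteration terminates within $k\asymp\log\delta^{-1}$ steps; the cumulative length loss is $\sum_{j\leq k}\log\delta_j^{-1}\asymp\log^2\delta_0^{-1}$, so we need $\log N\gtrsim\log^2\delta^{-1}$, yielding $\delta\ll e^{-c\sqrt{\log N}}$ in the quadratic case where the Weyl exponents balance evenly, hence $\mu(2,2)=1/2$. The main technical obstacle I foresee is re-establishing the intersective normalization at each iteration on the new subprogression: after replacing $A$ by its relatively dense intersection with $P$, both polynomials must be rescaled and shifted in a way that preserves the intersective structure, and uniform quantitative control on the Weyl savings $\tau$ and on the positivity of the local singular series must be maintained throughout, particularly when $\deg g\neq\deg h$ so the two minor-arc bounds balance asymmetrically and $\mu(\deg g,\deg h)$ depends nontrivially on the pair.
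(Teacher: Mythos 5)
Your overall architecture matches the paper's (and that of \cite{Rice}, where Theorem~A is actually proved): a circle-method density increment, a normalization exploiting intersectivity so the iteration can restart on subprogressions, a major/minor arc split, Weyl bounds on the minor arcs, and a final bookkeeping step converting a multiplicative density gain into the $e^{-c\sqrt{\log N}}$ bound. The iteration count in your last paragraph is also right, \emph{conditional} on genuinely obtaining $\delta'\geq(1+c)\delta$.

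The gap is precisely in how you propose to obtain that gain. You assert a pointwise large Fourier coefficient, $|\hat 1_A(a/q)|\gg\delta(M_gM_h)^{1/2}$, and then pigeonhole. Two objections. First, the scale is wrong: in the quadratic case $M_gM_h\asymp N$, so you are asserting $|\hat 1_A(a/q)|\gg\delta\sqrt N$, which holds vacuously for any set (compare the trivial $|\hat 1_A|\leq\delta N$) and is far too weak to produce any increment -- for that you need something of order $\delta^{1+O(1)}N$. Second, even after fixing the scale, pigeonholing the excess Fourier mass onto a \emph{single} arc spreads it over $\sim Q^{2}\sim\delta^{-O(1)}$ candidates and at best yields $|\hat 1_A(a/q)|\gg\delta^{1+O(1)}N$, hence only an additive gain $\delta'\geq\delta+c\delta^{1+O(1)}$. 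That requires $\sim\delta^{-O(1)}$ iterations and gives a power-of-$\log$ bound, not $e^{-c\sqrt{\log N}}$. What the paper does instead (Lemma~\ref{dinc}, Proposition~\ref{rstrick}, and the discussion in Section~\ref{boundsec}) is an $L^2$ increment: one shows $\int_{\mathbf{M}'_q}|\widehat{f_A}(\alpha)|^2\,d\alpha\gg\delta^2 N$ for a \emph{single} small $q$, where $\mathbf{M}'_q$ aggregates all rationals with denominator dividing $q$. It is this aggregation across an entire residue system, rather than a single-frequency pigeonhole, that produces the multiplicative $(1+c)$ increment your iteration count depends on. Relatedly, the paper works with the balanced function $f_A=1_A-\delta 1_{[1,N]}$, whose transform vanishes at $0$; this removes the dominant arc at $0$ at the outset and makes the $L^2$ accumulation clean, rather than having to subtract a large main term from $|\hat 1_A|^2$.

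Two smaller remarks. The counting integral you want is $\int_0^1|\hat 1_A(\alpha)|^2S_g(\alpha)S_h(\alpha)\,d\alpha$ with no conjugate on $S_h$, since the relevant equation is $a-a'=G(m)+H(n)$. And you correctly flag that re-establishing the intersective normalization at each step is nontrivial; \cite{Rice} handles this by carrying an auxiliary modulus through the iteration and replacing $g,h$ by auxiliary polynomials at each stage (in the present paper's Theorem~\ref{main} this is trivial because $q^2h(m,n)=h(qm,qn)$), a bookkeeping layer your sketch leaves open.
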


\noindent As a notable example, Theorem A establishes an upper bound of $\exp(-c\sqrt{\log N})$ for the density of subsets of $[1,N]$ lacking differences that are the sum of two squares. There is also a brief discussion of sums of three or more single-variable polynomials at the end of \cite{Rice}, but the improvements in density bounds are modest as $\exp(-c\sqrt{\log N})$ arises as a natural limit of the method, a fact that we discuss in Section \ref{boundsec}. 

While the generality of Theorem A is pleasing,  prohibited differences of the form $g(m)+h(n)$ can be thought of as the diagonal special case of differences of the form $h(m,n)$ where $h\in \Z[x,y]$. Of course, if $h(x,y)=\tilde{h}(g(x,y))$ for some $g\in \Z[x,y]$ and $\tilde{h}\in \Z[x]$ with $\deg(\tilde{h})\geq 2$, then the image of $h$ is contained in the image of $\tilde{h}$, in which case we could not hope to improve on the original setting of one single-variable polynomial. However, in other cases, we expect that the freedom of two variables should allow for improved density bounds. It is with this expectation in mind that we gently wade into the arena of potentially non-diagonal two-variable polynomials by exploring the following natural generalization of the aforementioned special case $m^2+n^2$.  

\begin{definition} $h\in \Z[x,y]$ is called a \textit{binary quadratic form} if $$h(x,y)=ax^2+bxy+cy^2$$ for some $a,b,c \in \Z$. Further, we define the \textit{discriminant} of $h$ by $$\Delta(h)=b^2-4ac,$$ noting that $h(x,y)=d(rx+sy)^2$ for some $d,r,s \in \Z$ if and only if $\Delta(h)=0$.

\end{definition}

Our main result is the following, which says that under the necessary restriction that a binary quadratic form does not collapse into a dilated perfect square, we achieve the same density bounds previously established in the diagonal case, which are likely the best possible for our chosen method.

\begin{theorem} \label{main} Suppose $h \in \Z[x,y]$ is a binary quadratic form with $\Delta(h)\neq 0$. If $A\subseteq [1,N]$ with $$a-a'\neq h(m,n)$$ for all distinct pairs $a,a'\in A$ and all $m,n \in \N$, then $$|A| \ll_{h} Ne^{-c\sqrt{\log N}}, $$ where $c=c(h)>0$.
\end{theorem}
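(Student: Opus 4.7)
My plan is to adapt the $L^2$ Fourier analytic density increment strategy from \cite{Rice} that yielded Theorem A, transplanting it from the diagonal case $g(m)+h(n)$ to the genuinely two-variable form $h(x,y)=ax^2+bxy+cy^2$. The first step is the standard setup: fix an intermediate scale $Y\leq N^{1/2}$ and, for $A\subseteq[1,N]$, define a counting operator that weights pairs $(a,a')\in A\times A$ with $a-a'=h(m,n)$ by an appropriate cutoff in $(m,n)$. If $A$ has no nonzero image differences, this operator reduces to its trivial ``diagonal'' contribution, which is strictly smaller than the expected main term whenever $|A|/N$ is not too small. Expressing the discrepancy via Plancherel concentrates $L^2$ mass of $\widehat{1_A}$ near rational frequencies with small denominator, and a careful major/minor arc separation converts that concentration into a density increment for $A$ on a long arithmetic progression.

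The crucial technical input, and what I expect to be the main obstacle, is a Weyl-type estimate for the two-variable exponential sum $S(\alpha)=\sum_{m,n\in[1,Y]}e(\alpha h(m,n))$. The plan is to exploit $\Delta(h)\neq 0$ in order to diagonalize the form: the identity $4a\cdot h(x,y)=(2ax+by)^2-\Delta y^2$ converts $S(\alpha)$, after substituting $u=2ax+by$ and $v=y$, into a sum over a sublattice of $\Z^2$ of $e(\beta u^2-\beta\Delta v^2)$ with $\beta=\alpha/(4a)$. Splitting the sublattice into residue classes modulo $2a$ then reduces $S(\alpha)$ to a bounded number of products of two independent univariate quadratic Weyl sums. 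This effectively places us in exactly the degree $(2,2)$ setting of Theorem A, so classical Weyl estimates deliver the minor-arc bound and standard Gauss-sum evaluations supply the required singular-series asymptotic on the major arcs.

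With these ingredients in hand, the $L^2$ density increment proceeds by the same mechanism as in Theorem A: the two free quadratic variables together contribute a factor of $\log^{-1/2}$ to the increment at each stage, rather than the $\log^{-1/3}$ that a single quadratic variable would yield, and iterating this increment the square-root savings compound to give $|A|/N\ll_h\exp(-c\sqrt{\log N})$. Beyond the Weyl estimate itself, the remaining subtleties are (i) ensuring that the sublattice constraint arising from the diagonalization does not obstruct either the minor-arc bound or the singular-series computation, and (ii) for indefinite forms, restricting $(m,n)$ to a region on which $h(m,n)$ takes values of a single sign and fixed order of magnitude, so that the counting operator has a genuinely positive main term---a technicality absent from the positive-definite ``sum of two squares'' model case.
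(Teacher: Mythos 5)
Your overall architecture --- $L^2$ Fourier density increment, major/minor arc decomposition, a two-variable exponential sum estimate feeding into a density increment on a progression of step $q$, then passing to a subprogression of step $q^2$ to inherit the difference condition --- is precisely the paper's strategy. Where you diverge is in how you obtain the crucial decay of the complete local sum $G(a,q)=\sum_{r,s=0}^{q-1}e^{2\pi i h(r,s)a/q}$. You propose to diagonalize via $4a\,h(x,y)=(2ax+by)^2-\Delta y^2$ and split into a product of two univariate Gauss sums over a sublattice; the paper instead applies Weyl differencing directly to the two-variable complete sum, which reduces the bound to counting solutions of the linear system $2b_1t+b_2u\equiv b_2t+2b_3u\equiv 0\pmod q$, a count that is $O_h(1)$ exactly because $\Delta\neq 0$. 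The direct route is genuinely cleaner here, and your version has a real gap: the diagonalization requires inverting $4a$ modulo $q$, and when $\gcd(4a,q)>1$ the map $(r,s)\mapsto(2ar+bs,s)$ is no longer a bijection of $(\Z/q\Z)^2$, so the claimed factorization of $G(a,q)$ into two classical Gauss sums fails. You flag this sublattice issue as a ``subtlety'' but leave it unresolved, and it is not cosmetic --- handling the prime powers dividing $4a$ is the entire content of the lemma in that regime, and a careless treatment loses the needed $q^{-1}$ normalized decay. Two further remarks. First, you only need $\Delta\neq 0$ on the major arcs: the paper's minor-arc bound is obtained by applying Van der Corput and one-variable Weyl in $m$ for each fixed $n$, with no diagonalization and no discriminant hypothesis whatsoever, so routing the minor arcs through your diagonalization is unnecessary extra work. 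Second, your concern (ii) about indefinite forms is handled in the paper without restricting $(m,n)$ to a region of fixed sign: working with the balanced function $f_A$ and imposing $|A\cap(N/9,8N/9)|\geq 3\delta N/4$ (the complementary case being an easy increment), together with $h([1,M]^2)\subseteq[-N/9,N/9]$, forces $\sum f_A(x)f_A(x+h(m,n))$ to be large and negative regardless of the sign pattern of $h$.
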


We note that the image of every nonzero binary quadratic form contains a dilation of the squares, and hence our result is only material because the established density bound is better than (\ref{PSSbound}). Our goal for the remainder of the paper is twofold: to establish Theorem \ref{main}, which we hope will serve as a starting point for the application of these methods to more general polynomials in several variables, and to provide thorough and self-contained exposition of all of the components of this iteration scheme for those unfamiliar with its previous applications, such as the original case of the squares.   

\section{Main Iteration Lemma: Deducing Theorem \ref{main}} \label{itsec} The principle behind a density increment strategy is that a set which lacks the desired  arithmetic structure should spawn a new, significantly denser subset of a slightly smaller interval with an inherited lack of arithmetic structure. Iterating this procedure enough times for the density to reach $1$ provides an upper bound on the density of the original set.

For this section, we fix a binary a quadratic form $h\in \Z[x,y]$  with $\Delta(h)\neq 0$, and we let $$I(h)=\{ h(m,n): m,n\in \N\} \setminus \{0\}. $$ Our iteration scheme is encapsulated by the following lemma, from which we quickly deduce Theorem \ref{main}.

\begin{lemma}\label{mainit} Suppose $A\subseteq [1,N]$ with $|A|=\delta N$ and $\delta \geq N^{-1/20}.$ If $(A-A)\cap I(h)= \emptyset$, then there exists $A'\subseteq [1,N']$ with $|A'|=\delta'N'$ and a constant $c=c(h)>0$ with 
\begin{equation*} N'\gg_h \delta^4 N, \quad  \delta' \geq (1+c)\delta, \quad \text{and} \quad (A'-A')\cap I(h)=\emptyset. \\\end{equation*}

\end{lemma}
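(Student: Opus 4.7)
The plan is to implement the classical $L^2$ Fourier density increment, in the spirit of S\'ark\"ozy and Pintz--Steiger--Szemer\'edi, with the two-variable exponential sum
$$S(\alpha) \;=\; \sum_{m,n=1}^{M} e^{2\pi i \alpha h(m,n)}$$
playing the role that the one-variable exponential sum plays in the single-polynomial setting. I would first choose $M$ of order $c_h \,\delta^2 \sqrt{N}$, so that $|h(m,n)| \leq \eta\, \delta^4 N$ for all $(m,n)\in[1,M]^2$ and a suitably small constant $\eta>0$; the hypothesis $\delta \geq N^{-1/20}$ is what guarantees that $M$ is a large integer and that the downstream parameters remain admissible. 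Using $(A-A)\cap I(h)=\emptyset$, the only surviving contributions to the weighted correlation
$$T \;=\; \sum_{m,n=1}^{M} \bigl|A \cap \bigl(A-h(m,n)\bigr)\bigr|$$
come from the zero locus of $h$, which is $\{(0,0)\}$ in the definite case and a union of at most two lines through the origin in the indefinite case, giving $T \ll_h M\,|A|$. Comparing this to the ``random'' prediction $T \approx \delta^2 N M^2$ and expressing the discrepancy via Parseval with the balanced function $g = 1_A - \delta\, 1_{[1,N]}$ yields
$$\Bigl|\int_0^1 |\hat{g}(\alpha)|^2\, \overline{S(\alpha)}\, d\alpha\Bigr| \;\gg\; \delta^2 N M^2.$$

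Next I would carry out the circle-method dissection, fixing a threshold $Q=C\delta^{-O(1)}$ (ultimately small enough that $q\leq Q$ forces $q^2 \leq \delta^{-4}$) and declaring the major arcs $\mathfrak{M}$ to be a thin neighborhood of rationals $a/q^2$ with $(a,q)=1$ and $q\leq Q$. The squared denominators are forced by the degree-two homogeneity $h(qx,qy)=q^2 h(x,y)$, which controls both the periodic structure of $S$ and the transfer step at the end. The main technical task --- and what I expect to be the chief obstacle --- is a Weyl-type minor arc estimate
$$\sup_{\alpha \notin \mathfrak{M}} |S(\alpha)| \;\leq\; \varepsilon M^2,$$
with $\varepsilon$ a small multiple of $\delta$. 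When $h=ax^2+cy^2$ is diagonal the sum factors as a product of classical Weyl/Gauss sums and standard estimates suffice, but for $b \neq 0$ one must first diagonalize $h$ by a $\Q$-linear (or unimodular) change of variables, which is available precisely because $\Delta(h)\neq 0$; one then performs Weyl differencing on the transformed sum and tracks the denominators back to the original variables. The indefinite case additionally demands care near the asymptotic cone $h=0$.

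Granted the Weyl bound, the minor arc contribution is at most $\varepsilon M^2 \|\hat{g}\|_2^2 \leq \varepsilon M^2 \delta N \ll \delta^2 N M^2$, so the full $\gg \delta^2 N M^2$ discrepancy is carried by the major arcs. Using the trivial $|S|\leq M^2$ on $\mathfrak{M}$ and pigeonhole over the $O(Q^2)$ rationals $a/q^2$, some such rational carries a matching fraction of $\int_{\mathfrak{M}} |\hat g|^2$. The standard sub-progression lemma (as developed throughout \cite{Rice}) then partitions $[1,N]$ into arithmetic progressions of common difference $q^2$ on which $e^{2\pi i \alpha x}$ is essentially constant, converting the concentrated Fourier mass into a density increment of size $(1+c)\delta$ on some such progression $P$ of length $|P|\gg_h \delta^4 N$.

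Finally, I pull $A\cap P$ back along the affine bijection $x_0+jq^2\mapsto j$ to obtain the required $A'\subseteq[1,N']$. The inherited avoidance $(A'-A')\cap I(h)=\emptyset$ is precisely why the common difference is $q^2$ rather than $q$: if $a'-b'=h(m,n)\neq 0$ for some $m,n\in\N$, then the corresponding elements of $A$ would differ by $q^2 h(m,n)=h(qm,qn)\in I(h)$, contradicting the hypothesis. The homogeneity identity $h(qm,qn)=q^2 h(m,n)$ thus plays exactly the role that the intersective condition plays in the single-variable setting, and the iteration closes.
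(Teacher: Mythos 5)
Your high-level skeleton (count correlations against the ``random'' prediction, pass to a balanced function, circle method, density increment on a progression, transfer along a step-$q^2$ subprogression via $h(qm,qn)=q^2h(m,n)$) matches the paper, and your final transfer step is exactly the paper's. But there are two genuine gaps in the middle, and they are precisely where the $(1+c)\delta$ increment --- the whole point of the lemma --- is won or lost.

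\textbf{The major arc dissection is wrong.} You define the major arcs as neighborhoods of rationals $a/q^2$ with $q\leq Q$, on the grounds that the degree-two homogeneity $h(qx,qy)=q^2h(x,y)$ ``controls the periodic structure of $S$.'' It does not. The complete local sum governing $S_M$ near a rational is $G(a,q)=\sum_{r,s=0}^{q-1}e^{2\pi i h(r,s)a/q}$, which lives modulo $q$, not $q^2$; hence $S_M(\alpha)$ has a peak of size $\approx |G(a,q)|\,M^2/q^2$ near \emph{every} rational $a/q$ with $q$ small, not merely those with square denominator. With your dissection, $\alpha$ near, say, $1/3$ or $2/5$ sits in the minor arcs, yet $|S_M(\alpha)|\gg_h M^2$ there generically, so the estimate $\sup_{\alpha\notin\mathfrak{M}}|S(\alpha)|\leq\varepsilon M^2$ you are granting yourself is false. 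In the paper the major arcs are $\bigcup_{q\leq \eta^{-1}}\mathbf{M}_q$ with $\mathbf{M}_q$ built from rationals of denominator $q$; the exponent two enters only at the very end, when one partitions the output progression of step $q$ into subprogressions of step $q^2$ before rescaling.

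\textbf{The trivial bound on the major arcs does not yield $(1+c)\delta$.} You propose using $|S|\leq M^2$ on $\mathfrak{M}$ and pigeonholing over the $O(Q^2)$ arcs. From $\int_{\mathfrak{M}}|\widehat{f_A}|^2|S|\gg\delta^2NM^2$ this gives $\int_{\mathfrak{M}}|\widehat{f_A}|^2\gg\delta^2N$, hence a single arc carrying $\gg\delta^2N/Q^2$. Feeding that into the density increment machinery (the paper's Lemma \ref{dinc}) with $\sigma\asymp Q^{-2}\asymp\delta^{O(1)}$ yields only a $(1+c\delta^{O(1)})\delta$ increment --- essentially S\'ark\"ozy's original numerology --- which, when iterated as in Section \ref{mainproof}, produces a bound of shape $(\log N)^{-O(1)}$, not $e^{-c\sqrt{\log N}}$. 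The missing ingredient is to use the complete Gauss sum estimate $|G(a,q)|\ll_h q$ (the paper's Lemma \ref{gauss2}, the unique point where $\Delta(h)\neq 0$ is exploited, via a \emph{direct two-variable Weyl differencing} rather than a prior diagonalization) \emph{on the major arcs}, giving $|S_M(\alpha)|\ll_h M^2/q$ for $\alpha\in\mathbf{M}_q$. That decay converts $\int_{\mathfrak{M}}|\widehat{f_A}|^2|S|\gg\delta^2NM^2$ into $\sum_{q\leq\eta^{-1}} q^{-1}\int_{\mathbf{M}_q}|\widehat{f_A}|^2\gg\delta^2N$, and the averaging trick (Proposition \ref{rstrick}), which exploits the inclusive definition $\mathbf{M}'_q=\bigcup_{r\mid q}\mathbf{M}_r$, then produces a \emph{single} denominator $q$ with $\int_{\mathbf{M}'_q}|\widehat{f_A}|^2\gg\delta^2N$, i.e.\ $\sigma\gg 1$, which is what delivers the $(1+c)\delta$ increment. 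You have the Gauss sum/Weyl differencing idea in your proposal, but you deploy it only for the minor arc sup bound, where all that is needed is $\leq\delta M^2/8$; you omit it exactly where it matters.

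A minor point: your choice $M\asymp_h\delta^2\sqrt{N}$ is unnecessarily small. The paper takes $M\asymp_h\sqrt{N}$, which only needs $|h(m,n)|\leq N/9$ so that $x+h(m,n)$ stays in $[1,N]$ for $x$ in the middle of the interval; nothing forces $|h(m,n)|\leq\eta\delta^4N$. This wouldn't break the argument, but it is a clue that the envisioned numerology is off.
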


\subsection{Proof of Theorem \ref{main}} \label{mainproof} Suppose $A \subseteq [1,N]$ with $|A|=\delta N$ and $(A-A)\cap I(h)= \emptyset$. Setting $A_0=A$, $N_0=N$, and $\delta_0=\delta$, Lemma \ref{mainit} yields, for each $m$, a set $A_m \subseteq [1,N_m]$ with $|A_m|=\delta_mN_m$ and $(A_m-A_m)\cap I(h)= \emptyset$ satisfying
\begin{equation} \label{Nm} N_m \geq c\delta^4N_{m-1} \geq (c\delta^4)^m N
\end{equation}
and
\begin{equation} \label{incsize} \delta_m \geq (1+c)\delta_{m-1} \geq (1+c)^m\delta
\end{equation}
as long as 
\begin{equation} \label{delm} \delta_m \geq N_m^{-1/20}.
\end{equation}
By (\ref{incsize}), we see that the density $\delta_m$ will surpass $1$, and hence (\ref{delm}) must fail, for $m=C\log(\delta^{-1})$. In particular, by (\ref{Nm}) we have 
\begin{equation*} \delta \leq (c\delta^4)^{-C\log(\delta^{-1})}N^{-1/20}, 
\end{equation*}
which can be rearranged to 
\begin{equation*} N\leq e^{C\log^2(\delta^{-1})}
\end{equation*} 
and hence implies
\begin{equation*} \delta \ll_h e^{-c\sqrt{\log N}},
\end{equation*}
as required. \qed

\subsection{Roadmap for the remainder of the paper} \label{roadmap} Our task is is now completely reduced to a proof of Lemma \ref{mainit}, the two major components of which are described below. \begin{itemize}\item[(i)] The condition $(A-A)\cap I(h)=\emptyset$ represents unexpected, nonuniform behavior, which we  expect to be detectable in the Fourier analytic behavior of $A$. More specifically, we use orthogonality of characters and adaptations of standard exponential sum estimates to locate a single small denominator $q$ such that the Fourier transform of the characteristic function of $A$, translated to have mean value zero, has substantial $L^2$ concentration near rationals with denominator $q$. The Fourier analytic infrastructure is introduced in Section \ref{faz}, the proof of this component is carried out in Section \ref{sqL2proof}, and the required exponential sum estimates are exposed in great detail in Section \ref{expest}.  \\ \item[(ii)] The substantial $L^2$ concentration of the transform of the translated characteristic function of $A$ near rationals with a particular denominator $q$ indicates a correlation of $A$ with a linear phase function. In particular, we show that this implies that $A$ has significantly increased relative density on a long arithmetic progression $P$ of step size $q$. Since this implication has nothing to do with $h$, or any other assumptions about $A$, we prove a general version preemptively in Section \ref{dincsec}. Finally, by shifting and rescaling the intersection of $A$ with a subprogression of $P$ of step size $q^2$, we obtain our new, denser set $A'$ with $(A'-A')\cap I(h)=\emptyset$. The complete deduction of Lemma \ref{mainit} from these two components is carried out in Section \ref{mainitproof}.\end{itemize}    

\subsection{A discussion of novelty and bounds} \label{boundsec} As indicated in the introduction, the procedure outlined in Section \ref{roadmap}, though refined over the years, goes back to S\'ark\"ozy in the 1970s. The improvement in bounds in Theorem \ref{main} and Theorem A, as compared to the case of one single-variable polynomial, comes from the details of the numerology in Lemma \ref{mainit}, most notably the size of the density increment $\delta' \geq (1+c)\delta$. This effectively optimal increase in density is facilitated by the quality of the exponential sum estimates mentioned in item (i) above. 

More specifically, the size of the density increment can be traced to the level of decay achieved in normalized complete local exponential sums. In the original setting of square differences, for example, the relevant decay comes from the standard estimate \begin{equation} \label{novel1} \left|\frac{1}{q}\sum_{r=0}^{q-1} e^{2\pi i r^2 a/q}\right| \ll q^{-1/2} \end{equation} for $(a,q)=1$, which ultimately leads to a density increment $\delta' \geq \delta+c\delta^2$. Substituting this increment size, and other minor necessary modifications, into the proof in Section \ref{mainproof} leads to the upper bound $$\delta \ll \frac{\log\log N}{\log N}, $$ which is better than S\'ark\"ozy's original result (\ref{sark1bound}). The reader may refer to \cite{LR} or \cite{thesis} for  full expositions of this refinement in the cases of squares, shifted primes, and, in the latter case, intersective polynomials.

In the case of sums of two squares, covered in Theorem A, the relevant decay comes from the analogous two-variable sum that then splits, allowing one to use the same estimate (\ref{novel1}) to conclude \begin{equation*}\left|\frac{1}{q^2}\sum_{r,s=0}^{q-1} e^{2\pi i (r^2+s^2) a/q}\right| = \left|\frac{1}{q}\sum_{r=0}^{q-1} e^{2\pi i r^2 a/q}\right|^2 \ll  q^{-1} \end{equation*} for $(a,q)=1$, which is good enough to get the optimal density increment. The novelty of Theorem \ref{main} is rooted in the fact that when $\Delta(h)\neq 0$, we get the same level of decay, namely \begin{equation*} \left|\frac{1}{q^2}\sum_{r,s=0}^{q-1} e^{2\pi i h(r,s) a/q}\right| \ll_h q^{-1} \end{equation*} for $(a,q)=1$, even though the sum no longer necessarily splits.

In order to improve on the bound $\exp(-c\sqrt{\log N})$ using this approach, for any fixed set of prohibited differences, one of two components of the numerology of Lemma \ref{mainit} must be improved: either the ratio $N'/N$ must decay more slowly than any power of $\delta$, or the ratio $\delta'/\delta$ must tend to infinity, as $\delta\to 0$, neither of which appear feasible in any nontrivial context. However, the question of whether the known upper bounds are even remotely sharp remains completely open in all of the aforementioned cases. For a more detailed discussion of lower bounds, constructions, and conjectures, the reader may refer to Section 1.4 of \cite{Rice}. 

\section{Preliminaries} 

\subsection{Fourier analysis and the circle method on $\Z$} \label{faz} We embed our finite sets in $\Z$, on which we utilize the discrete Fourier transform. Specifically, for a function $F: \Z \to \C$ with finite support, we define $\widehat{F}: \T \to \C$, where $\T$ denotes the  circle parametrized by the interval $[0,1]$ with $0$ and $1$ identified, by \begin{equation*} \widehat{F}(\alpha) = \sum_{n \in \Z} F(n)e^{-2 \pi in\alpha}. \end{equation*} In this finite support context, Plancherel's Identity \begin{equation} \label{planch} \sum_{n\in \Z} |F(n)|^2=\int_0^1 |\widehat{F}(\alpha)|^2 d \alpha \end{equation} is a direct consequence of the orthogonality relation \begin{equation} \label{orthrel} \int_0^1 e^{2\pi i n \alpha} d\alpha = \begin{cases} 1 &\text{if } n=0 \\ 0 &\text{if } n \in \Z \setminus \{0\}. \end{cases} 
\end{equation}

\noindent Given $N\in \N$ and a set $A\subseteq [1,N]$ with $|A|=\delta N$, we examine the Fourier analytic behavior of $A$ by considering the \textit{balanced function}, $f_A$, defined by
\begin{equation*} f_A=1_A-\delta 1_{[1,N]}.\end{equation*} 
We analyze $\widehat{f_A}$, and other arising exponential sums, using the Hardy-Littlewood circle method, decomposing the frequency space into two components: the set of points on the circle that are close to rationals with small denominator, and the complement. 

\begin{definition}Given $N\in\N$ and $\eta>0$, we define, for each $q\in \N$ and $a\in [1,q]$,
\begin{equation*} \mathbf{M}_{a/q}=\mathbf{M}_{a/q}(N,\eta)=\left\{ \alpha \in \T : |\alpha-\frac{a}{q}| < \frac{1}{\eta^2 N} \right\}, \ \ \mathbf{M}_q=\bigcup_{(a,q)=1} \mathbf{M}_{a/q}, \ \text{ and } \ \mathbf{M}'_q = \bigcup_{r\mid q} \mathbf{M}_q = \bigcup_{a=1}^q \mathbf{M}_{a/q}.
\end{equation*}
We then define $\mathfrak{M}$, the \textit{major arcs} and $\mathfrak{m}$, the \textit{minor arcs}, by
\begin{equation*} \mathfrak{M}=\bigcup_{q=1}^{\eta^{-1}} \mathbf{M}_q, \quad \mathfrak{m}=\T\setminus \mathfrak{M}.
\end{equation*} We note that if $\eta^2 N > 2Q^2$, then \begin{equation} \label{majdisj}\mathbf{M}_{a/q}\cap\mathbf{M}_{b/r}=\emptyset \end{equation}whenever  $a/q\neq b/r$ and  $q,r \leq Q$. 
\end{definition}

\subsection{Density increment lemma} \label{dincsec} The following standard result shows that for  $A\subseteq [1,N]$, $L^2$ concentration of $\widehat{f_A}$ near rationals with a particular denominator $q$ implies increased relative density on a long arithmetic progression of step size $q$, as described in item (ii) in Section \ref{roadmap}.

\begin{lemma} \label{dinc} Suppose $A \subseteq [1,N]$ with $|A|=\delta N$. If  $q \in \N$, $\sigma,\eta>0$, and
\begin{equation*} \int_{\mathbf{M}'_q}|\widehat{f_A}(\alpha)|^2d\alpha \geq \sigma\delta^2 N,
\end{equation*} 
then there exists an arithmetic progression 
\begin{equation*}P=\{x+\ell q : 1\leq \ell \leq L\}
\end{equation*}
with $qL \gg \min\{\sigma, \eta^2\} N $ and $|A\cap P| \geq (1+\sigma/32)\delta L$.
\end{lemma}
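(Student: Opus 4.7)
The plan is to translate the hypothesized $L^2$ concentration of $\widehat{f_A}$ on $\mathbf{M}'_q$ into a pointwise density increment on a single length-$L$ arithmetic progression of step $q$, using an AP-detecting kernel. For an integer $L$ to be specified, I would define
\[ h(n) := \sum_{\ell=0}^{L-1} f_A(n+\ell q), \]
which measures the deviation of $|A\cap\{n,n+q,\dots,n+(L-1)q\}|$ from its expected value $\delta L$. Since $h$ is the convolution of $f_A$ with the indicator of $\{0,-q,\dots,-(L-1)q\}$, its Fourier transform factors as $\widehat{h}(\alpha) = \widehat{f_A}(\alpha)K(\alpha)$ where $K(\alpha) = \sum_{\ell=0}^{L-1} e^{2\pi i \ell q\alpha}$. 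Choosing $L$ so that $qL$ is a small absolute constant times $\min(\sigma,\eta^2)N$ ensures that for any $\alpha = a/q + \beta \in \mathbf{M}'_q$ one has $|Lq\beta| \lesssim Lq/(\eta^2 N) \ll 1$, so each term $e^{2\pi i \ell q\alpha} = e^{2\pi i \ell q\beta}$ stays close to $1$ and $|K(\alpha)| \geq L/2$. Plancherel's identity and the hypothesis then give
\[ \sum_n h(n)^2 \;=\; \int_\T |\widehat{f_A}(\alpha)|^2 |K(\alpha)|^2 \,d\alpha \;\geq\; \frac{L^2}{4}\int_{\mathbf{M}'_q}|\widehat{f_A}|^2\,d\alpha \;\geq\; \frac{L^2\sigma\delta^2 N}{4}. \]

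To convert this $L^2$ bound into a usable pointwise estimate, I would exploit the asymmetry that $h(n) \geq -\delta L$ pointwise (since each $f_A(m) \geq -\delta$), while $h(n) \leq (1-\delta)L$. Writing $h = h_+ - h_-$ with $S := \sum h_+ = \sum h_-$ (using $\sum h = 0$), the inequality $h_-^2 \leq \delta L \cdot h_-$ yields $\sum h_-^2 \leq \delta L \cdot S$, and hence $\sum h_+^2 \geq L^2\sigma\delta^2 N/4 - \delta L\cdot S$. A case split on $S$ then produces the required increment: if $S \leq L\sigma\delta N/16$, then $\sum h_+^2 \gtrsim L^2\sigma\delta^2 N$ combined with $|\mathrm{supp}(h_+)| \lesssim N$ forces $\max h_+ \gtrsim L\delta\sqrt\sigma$; if instead $S \geq L\sigma\delta N/16$, the direct pigeonhole $\max h_+ \geq S/|\mathrm{supp}(h_+)|$ gives $\max h_+ \gtrsim L\sigma\delta$. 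In either case (for $\sigma$ below an absolute constant, which is harmless since a larger $\sigma$ renders the conclusion trivial), $\max h_+ \geq L\sigma\delta/32$; a maximizing $n_0$ yields $|A\cap P_{n_0}| = h(n_0)+\delta L \geq (1+\sigma/32)\delta L$ for $P_{n_0} = \{n_0, n_0+q, \dots, n_0+(L-1)q\}$, and setting $x = n_0-1$ delivers the required $P$.

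The chief technical obstacle is ensuring the maximizer $n_0$ lies in the ``full'' region $\{1\leq n\leq N-(L-1)q\}$ rather than in the boundary zone (of size $\lesssim Lq$), where $P_{n_0}$ overhangs $[1,N]$ and the identity $|A\cap P_{n_0}| = h(n_0)+\delta L$ fails. Since $|h(n)| \leq L$ on the boundary, that region's contribution to $\sum h^2$ is bounded by $\lesssim L^3 q$ and to $S$ by $\lesssim L^2 q$. Choosing the implicit constant in $qL \asymp \min(\sigma,\eta^2)N$ sufficiently small makes both boundary contributions strictly subdominant to the main terms driving the case analysis, so that the two cases above locate the maximizer in the full region and the final identity holds exactly.
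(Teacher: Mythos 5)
Your overall strategy matches the paper's: convolve $f_A$ with the indicator of a step-$q$, length-$L$ progression chosen so that the kernel has magnitude at least $L/2$ on $\mathbf{M}'_q$, apply Plancherel, and exploit the one-sided bound $h(n)\geq -\delta L$ together with $\sum_n h(n)=0$ to extract a pointwise increment. The kernel estimate, the Plancherel step, and the lower-bound-driven inequality $\sum h_-^2 \leq \delta L\sum h_-$ are all correct, and the case split on $S$ is a reasonable alternative to the paper's argument.

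The gap is in the boundary treatment, and it is a real one. You bound $|h(n)|\leq L$ on the set $F$ of overhanging $n$, giving contributions $\lesssim L^3q$ to $\sum h^2$ and $\lesssim L^2q$ to $S$, and you claim these become subdominant to the main terms $L^2\sigma\delta^2N$ and $L\sigma\delta N$ by taking the constant in $qL\asymp\min(\sigma,\eta^2)N$ small. But subdominance of the first quantity requires $qL\ll\sigma\delta^2N$, which does not follow from $qL\leq c\min(\sigma,\eta^2)N$: if $\eta^2\geq\sigma$ you would need $c\ll\delta^2$, impossible for a fixed constant as $\delta\to 0$; and even if $\eta^2<\sigma$, you would need $\eta^2\ll\sigma\delta^2/c$, which the hypotheses of the lemma do not provide. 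The same problem, one power of $\delta$ milder, afflicts the $S$ bound. So the claim that "the maximizer lies in the full region" is not established.

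The paper closes exactly this hole by a preliminary reduction you omit: one may assume $f_A*\widetilde{1_P}(n)\leq 2\delta L$ for every $n\in\Z$, since otherwise some translate $P+n$ already satisfies $|A\cap(P+n)|>2\delta L\geq(1+\sigma/32)\delta L$. Combined with the unconditional lower bound $f_A*\widetilde{1_P}(n)\geq -\delta L$, this gives $|f_A*\widetilde{1_P}|\leq 2\delta L$ on all of $\Z$, including $F$, so the boundary contribution to $\sum(f_A*\widetilde{1_P})_+$ is at most $2\delta L|F|\leq 4q\delta L^2\leq\sigma\delta LN/32$, which is genuinely half the main term regardless of how $\sigma$, $\eta$, and $\delta$ compare. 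This WLOG reduction also removes the need for your case split: from $|f_A*\widetilde{1_P}|\leq 2\delta L$ and mean zero one gets $\sum(f_A*\widetilde{1_P})_+ = \tfrac12\sum|f_A*\widetilde{1_P}|\geq\tfrac{1}{4\delta L}\sum|f_A*\widetilde{1_P}|^2\geq\sigma\delta NL/16$ directly, after which a single pigeonhole over $|E|\leq N$ finishes.
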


\begin{proof}
Suppose $A\subseteq [1,N]$ with $|A|=\delta N$, $\sigma, \eta>0$. Suppose further that  
\begin{equation}\label{qmass} \int_{\mathbf{M'}_q}|\widehat{f_A}(\alpha)|^2d\alpha \geq \sigma\delta^2 N,
\end{equation}  
and let $P = \{q,2q, \dots, Lq\}$ with $L= \lfloor \min\{\sigma, \eta^2\} N/128 q \rfloor$. We will show that some translate of $P$ satisfies the conclusion of Lemma \ref{dinc}. We note that for $\alpha \in [0,1]$,
\begin{equation}\label{Phat} |\widehat{1_{P}}(\alpha)|=\Big| \sum_{\ell=1}^L e^{-2\pi i \ell q \alpha} \Big| \geq L-\sum_{\ell=1}^L |1-e^{-2\pi i \ell q \alpha}|\geq L-2\pi L^2 \norm{q\alpha},
\end{equation} where $\norm{\cdot}$ denotes the distance to the nearest integer. Further, if $\alpha \in \mathbf{M}'_q$, then \begin{equation}\label{dni} \norm{q\alpha} \leq \frac{q}{\eta^2 N} \leq \frac{1}{4\pi L}.
\end{equation} Therefore, by (\ref{Phat}) and (\ref{dni}) we have 
\begin{equation}\label{L/2} |\widehat{1_{P}}(\alpha)| \geq L/2 \quad \text{for all} \quad \alpha \in \mathbf{M}'_q.
\end{equation}
By (\ref{qmass}), (\ref{L/2}), and Plancherel's Identity (\ref{planch}) we see
\begin{equation}\label{convmass}  \sigma\delta^2 N \leq \int_{\mathbf{M}'_q}|\widehat{f_A}(\alpha)|^2d\alpha \leq \frac{4}{L^2} \int_0^1|\widehat{f_A}(\alpha)|^2|\widehat{1_{P}}(\alpha)|^2d\alpha=\frac{4}{L^2}\sum_{n\in \Z} |f_A * \widetilde{1_{P}}(n)|^2,
\end{equation} where  $\widetilde{1_{P}}(n)= 1_{P}(-n)$ and  
\begin{equation}\label{conv} f_A * \widetilde{1_{P}}(n)=\sum_{m\in \Z}f_A(m)1_{P}(m-n)= |A\cap(P+n)|-\delta|(P+n)\cap[1,N]|.
\end{equation}
We now take advantage of the fact that $f_A$, and consequently $f_A * \widetilde{1_{P}}$, has mean value zero. In other words,
\begin{equation} \label{mv0} \sum_{n\in\Z} f_A * \widetilde{1_{P}}(n)=0.
\end{equation} As with any real valued function, we can write \begin{equation} \label{pospart} |f_A * \widetilde{1_{P}}|= 2(f_A * \widetilde{1_{P}})_+ -f_A * \widetilde{1_{P}},\end{equation} where $(f_A * \widetilde{1_{P}})_+=\max\{f_A * \widetilde{1_{P}},0\}$. 

\noindent For the purposes of proving Lemma \ref{dinc}, we can assume that $f_A * \widetilde{1_{P}}(n) \leq 2\delta L$ for all $n\in \Z$, as otherwise the progression $P+n$ would more than satisfy the conclusion. Combined with the trivial upper bound $f_A * \widetilde{1_{P}}(n)\geq -\delta L$, we can assume 
\begin{equation} \label{linfty} |f_A * \widetilde{1_{P}}(n)| \leq 2\delta L \quad \text{for all} \quad n\in \Z.
\end{equation}
By (\ref{convmass}), (\ref{mv0}), (\ref{pospart}), and (\ref{linfty}), we have
\begin{equation}\label{posmass} \sum_{n\in\Z} (f_A * \widetilde{1_{P}})_+(n) = \frac{1}{2} \sum_{n\in \Z} |f_A * \widetilde{1_{P}}| \geq \frac{1}{4\delta L} \sum_{n\in \Z} |f_A * \widetilde{1_{P}}|^2\geq\frac{\sigma\delta NL}{16}. 
\end{equation}
By (\ref{conv}), we see that $f_A * \widetilde{1_{P}}(n)=0$ if $n\notin [-qL,N]$. Letting $E= \{ n\in \Z : P+n \subseteq [1,N] \}$ and $F=[-qL,N]\setminus E$, we see that $|F| \leq 2qL$. Therefore, by (\ref{linfty}), (\ref{posmass}), and the bound $128qL\leq \sigma N$, we have 
\begin{equation} \label{EF} \sum_{n\in E} (f_A * \widetilde{1_{P}})_+(n)  \geq \frac{\sigma\delta NL}{16}-2\delta L|F| \geq \frac{\sigma\delta NL}{16}-4q\delta L^2> \frac{\sigma\delta NL}{32}.
\end{equation}  Recalling that $|E|\leq N$ and $f_A * \widetilde{1_{P}}(n)= |A\cap (P+n)| - \delta L$ for all $n\in E$, we have that there exists $n\in \Z$ with $$|A\cap(P+n)| \geq (1 + \sigma/32)\delta L,$$ as required. \end{proof}

\section{$L^2$ Concentration} For this section, we once again fix a binary a quadratic form $h\in \Z[x,y]$ with $\Delta(h)\neq 0$, and  let $$I(h)=\{ h(m,n): m,n\in \N\} \setminus \{0\}. $$ The following result makes precise the implication outlined in item (i) in Section \ref{roadmap}, in which the condition $(A-A)\cap I(h)=\emptyset$ forces substantial $L^2$ concentration of $\widehat{f_A}$ near rationals with a single small denominator. Combining this with Lemma \ref{dinc}, we then quickly deduce Lemma \ref{mainit}.

\begin{lemma}  \label{sqL2} Suppose $A\subseteq [1,N]$ with $|A|=\delta N$, and let $\eta=c_0\delta$ for a sufficiently small constant $c_0=c_0(h)>0$. If $(A-A)\cap I(h)=\emptyset$, $\delta \geq N^{-1/20}$, and $|A\cap(N/9,8N/9)|\geq 3\delta N/4$, then there exists $q\leq \eta^{-1}$ such that 
\begin{equation*} \int_{\mathbf{M}'_q} |\widehat{f_A}(\alpha)|^2d\alpha \gg_h \delta^2 N.
\end{equation*}
\end{lemma}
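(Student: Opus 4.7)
The plan is a Fourier-analytic counting argument that converts the absence of differences in $I(h)$ into $L^2$ concentration of $\widehat{f_A}$. I would fix $M=\lfloor c_h\sqrt{N}\rfloor$ with $c_h=c_h(h)>0$ chosen so that $|h(m,n)|\leq N/9$ for every $(m,n)\in[1,M]^2$, and introduce
$$S_h(\alpha)=\sum_{m,n=1}^M e^{-2\pi i h(m,n)\alpha}.$$
The hypothesis $(A-A)\cap I(h)=\emptyset$ forces the pair count
$$T:=\sum_{m,n=1}^M |A\cap(A-h(m,n))|$$
to receive contributions only from $(m,n)$ with $h(m,n)=0$, and since $\Delta(h)\neq 0$ implies that $h$ has only $O_h(M)$ integer zeros in $[1,M]^2$, we get $T\ll_h \delta NM$. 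On the other hand Plancherel gives $T=\int_0^1|\widehat{1_A}(\alpha)|^2 S_h(\alpha)\,d\alpha$, and writing $1_A=f_A+\delta 1_{[1,N]}$ and expanding produces a positive main term $\delta^2\sum_{m,n}(N-|h(m,n)|)\geq\tfrac{8}{9}\delta^2NM^2$, a cross term controlled by Cauchy--Schwarz together with $\int|S_h||\widehat{1_{[1,N]}}|^2\leq M^2N$, and the ``error'' $\int S_h\,|\widehat{f_A}|^2$. Since $\delta\geq N^{-1/20}$ and $M\asymp\sqrt{N}$ force $\delta M\gg 1$, the upper bound on $T$ is negligible next to $\delta^2 NM^2$, and solving the resulting quadratic inequality in $\sqrt{X}$ yields
$$X:=\int_0^1|\widehat{f_A}(\alpha)|^2|S_h(\alpha)|\,d\alpha\gg_h\delta^2NM^2.$$

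Next I would split $X$ against the major/minor-arc decomposition with parameter $\eta=c_0\delta$. The exponential sum estimates of Section \ref{expest}, which exploit $\Delta(h)\neq 0$ to establish the $q^{-1}$ decay of the normalized local sum $q^{-2}\sum_{r,s=0}^{q-1}e^{2\pi ih(r,s)a/q}$ without any appeal to splitting $h$ into one-variable sums, supply the pointwise bound $|S_h(\alpha)|\ll_h M^2/q$ on each major arc $\mathbf{M}_q$. On the minor arcs Dirichlet approximation forces the reduced denominator to exceed $\eta^{-1}$, hence $|S_h(\alpha)|\ll_h M^2\eta$ there, and Plancherel then gives
$$\int_{\mathfrak{m}}|\widehat{f_A}|^2|S_h|\ll_h M^2\eta\cdot\|f_A\|_2^2\leq c_0\delta^2 NM^2,$$
which is less than half the lower bound on $X$ provided $c_0=c_0(h)$ is sufficiently small. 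The major arcs therefore carry the mass:
$$\sum_{q\leq\eta^{-1}}\frac{1}{q}\int_{\mathbf{M}_q}|\widehat{f_A}|^2\,d\alpha\gg_h\delta^2N.$$

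To extract a single denominator I would invoke the divisor nesting $\mathbf{M}'_{q^*}\supseteq\mathbf{M}_r$ for every $r\mid q^*$. Setting $J_r=\int_{\mathbf{M}_r}|\widehat{f_A}|^2$ and using the elementary bound $\lfloor\eta^{-1}/r\rfloor\geq\eta^{-1}/(2r)$ for $r\leq\eta^{-1}$,
$$\sum_{q^*\leq\eta^{-1}}\int_{\mathbf{M}'_{q^*}}|\widehat{f_A}|^2=\sum_{r\leq\eta^{-1}}J_r\lfloor\eta^{-1}/r\rfloor\geq\frac{\eta^{-1}}{2}\sum_{r\leq\eta^{-1}}\frac{J_r}{r}\gg_h\eta^{-1}\delta^2N,$$
so a pigeonhole over the $\leq\eta^{-1}$ many values of $q^*$ produces one $q^*\leq\eta^{-1}$ with $\int_{\mathbf{M}'_{q^*}}|\widehat{f_A}|^2\gg_h\delta^2 N$, as required. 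I expect the principal obstacle to be the local exponential sum estimate in the non-split case---extending $|S_h(\alpha)|\ll_h M^2/q$ to a general binary quadratic form with $\Delta(h)\neq 0$ rather than the diagonal case $g_1(x)+g_2(y)$ treated in Theorem A---which is precisely the novelty highlighted in Section \ref{boundsec} and the subject of Section \ref{expest}. The middle-third hypothesis ensures the main-term lower bound remains robust even if one prefers to run the count against $1_{A\cap(N/9,8N/9)}$ rather than $1_A$.
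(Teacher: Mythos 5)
Your proposal is correct and follows the same overall scheme as the paper: fix $M\asymp\sqrt{N}$ so that $h([1,M]^2)\subseteq[-N/9,N/9]$, use the absence of $I(h)$-differences to force $\int_0^1|\widehat{f_A}|^2|S|\,d\alpha\gg_h\delta^2 NM^2$, split into major and minor arcs using the estimates $|S|\ll_h M^2/q$ on $\mathbf{M}_q$ and $|S|\ll_h\eta M^2$ on $\mathfrak m$, and then extract a single $q$ from the divisor/floor rearrangement (this last step is exactly Proposition \ref{rstrick} in the paper). The one genuine point of departure is the derivation of the $L^1$-weighted lower bound. The paper stays in physical space: it correlates the balanced function $f_A$ with its translates, and the middle-third hypothesis $|A\cap(N/9,8N/9)|\geq 3\delta N/4$ is used precisely to bound the two cross terms $\delta\sum 1_A(x)1_{[1,N]}(x\pm h(m,n))$ from below, so that the whole correlation sum is $\leq-\delta^2N|\Lambda|/2$. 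You instead run the count against $1_A$, expand $|\widehat{1_A}|^2=|\widehat{f_A}+\delta\widehat{1_{[1,N]}}|^2$ in frequency space, bound the cross term by Cauchy--Schwarz against $(\int|\widehat{1_{[1,N]}}|^2|S|)^{1/2}\leq M\sqrt N$, and solve a quadratic in $\sqrt X$. Your route is slightly cleaner in that it never actually needs the middle-third hypothesis, at the modest cost of the implicit $X$-dependence in the cross-term bound; both give the same conclusion with comparable constants, since $\delta M\gg N^{9/20}\to\infty$ makes the $O_h(\delta NM)$ term negligible. Two small points you leave implicit but would want in a full write-up: the Farey disjointness $\eta^2N>2Q^2$ (which holds here since $\delta\geq N^{-1/20}$) is needed to turn $\int_{\mathbf{M}'_{q^*}}$ into $\sum_{r\mid q^*}\int_{\mathbf{M}_r}$; and the bound $|Z|\ll_h M$ on integer zeros of $h$ in $[1,M]^2$ is true for every nonzero binary quadratic form and does not in fact require $\Delta(h)\neq 0$ --- the nonvanishing discriminant is needed only for the complete local sum estimate in Lemma \ref{gauss2}.
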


\subsection{Proof of Lemma \ref{mainit}} \label{mainitproof} Suppose $A\subseteq [1,N]$, $|A|=\delta N$, $\delta \geq N^{-1/20}$, and $(A-A)\cap I(h)=\emptyset$.

\noindent If $|A\cap (N/9,8N/9)| < 3\delta N/4$, then $\max \{ |A\cap[1,N/9]|, |A\cap [8N/9,N]| \} > \delta N/8$. In other words, $A$ has density at least $9\delta/8$ on one of these intervals.

\noindent Otherwise, Lemmas \ref{sqL2} and \ref{dinc} apply, so in either case, letting $\eta=c_0\delta$, there exists $q\leq \eta^{-1}$ and an arithmetic progression 
\begin{equation*}P=\{x+\ell q : 1\leq \ell \leq L\}
\end{equation*}
with $qL\gg_h \delta^2 N$ and $|A\cap P| \geq (1+c)\delta L$. Partitioning $P$ into subprogressions of step size $q^2$, the pigeonhole principle yields a progression 
\begin{equation*} P'=\{y+\ell q^2 : 1\leq \ell \leq N'\} \subseteq P
\end{equation*}
with $N'\geq L/2q$ and $|A\cap P'| \geq (1+c)\delta N'$. This allows us to define a set $A' \subseteq [1,N']$ by \begin{equation*} A' = \{\ell \in [1,N'] : y+\ell q^2 \in A \},
\end{equation*} which clearly satisfies $|A'| \geq (1+c)\delta N'$ and $N'\gg_h \delta^2 N/q^2 \gg_h \delta^4N$. Moreover, since $q^2h(m,n)=h(qm,qn)$, $A'$ inherits the lack of $h(m,n)$ differences from $A$. \qed

\

\noindent Our task is now completely reduced to a proof of Lemma \ref{sqL2}.

\subsection{Proof of Lemma \ref{sqL2}} \label{sqL2proof} Suppose $A\subseteq [1,N]$ with $|A|=\delta N$, and let $\eta=c_0\delta$. 

\noindent We let $J=|b_1|+|b_2|+|b_3|$, $M=\sqrt{N/9J}$, $Z=\{(m,n) \in [1,M]^2: h(m,n)=0\}$, and  $\Lambda=[1,M]^2\setminus Z$. 

\noindent We note that \begin{equation}\label{z} |Z| \ll_h M. \end{equation}

\noindent If $(A-A)\cap I(h)=\emptyset$, then since $h(\Lambda)\subseteq [-N/9,N/9]$, we see that 
\begin{align*} \sum_{\substack{x\in \Z \\ (m,n)\in \Lambda}} f_A(x)f_A(x+h(m,n)) =  &\sum_{\substack{x\in \Z \\ (m,n)\in \Lambda}} 1_A(x)1_A(x+h(m,n)) - \delta  \sum_{\substack{x\in \Z \\ (m,n)\in \Lambda}} 1_A(x)1_{[1,N]}(x+h(m,n)) \\ -\delta  &\sum_{\substack{x\in \Z \\ (m,n)\in \Lambda}}  1_{[1,N]}(x-h(m,n))1_A(x) + \delta^2  \sum_{\substack{x\in \Z \\ (m,n)\in \Lambda}} 1_{[1,N]}(x)1_{[1,N]}(x+h(m,n)) \\ \leq &\Big(\delta^2 N - 2\delta |A\cap(N/9,8N/9)| \Big)|\Lambda|.
\end{align*}
Therefore, if $|A\cap(N/9,8N/9)| \geq 3\delta N/4$, we have
\begin{equation}\label{neg} \sum_{\substack{n\in \Z \\ 1\leq m \leq M}} f_A(n)f_A(x+h(m,n)) \leq -\delta^2N|\Lambda|/2.
\end{equation}
One can check using orthogonality (\ref{orthrel}) and Plancherel's Identity (\ref{planch}) that 
\begin{align*}\label{trans} \sum_{\substack{x\in \Z \\ (m,n)\in \Lambda}} f_A(x)f_A(x+h(m,n)) &= \sum_{\substack{x,y\in \Z \\ (m,n)\in \Lambda}} f_A(x)f_A(y)\int_{0}^1e^{2\pi i (x-y+h(m,n))\alpha} d\alpha \\ &= \int_0^1 \left(\sum_{x\in \Z}f_A(x)e^{2\pi i x \alpha} \right) \left(\sum_{y \in \Z} f_A(y) e^{-2\pi i y \alpha} \right) \left(\sum_{(m,n)\in \Lambda} e^{2\pi i h(m,n)\alpha}\right) d\alpha \\ &=\int_0^1 |\widehat{f_A}(\alpha)|^2S_M(\alpha) d\alpha +O(\delta N |Z|), 
\end{align*}
where 
\begin{equation*} S_x(\alpha)=\sum_{1\leq m,n \leq x} e^{2\pi i h(m,n)\alpha}.
\end{equation*}
Combined with  (\ref{z}), (\ref{neg}), and the triangle inequality, this yields
\begin{equation} \label{mass} \int_0^1 |\widehat{f_A}(\alpha)|^2|S_M(\alpha)| d\alpha \geq \delta^2NM^2/4.
\end{equation} 
By adapting traditional exponential sum estimates to this two-variable setting, and at one point carefully exploiting that $\Delta(h)\neq 0$, we have that if $\delta \geq N^{-1/20}$, then
\begin{equation} \label{Smaj} |S_M(\alpha)| \ll_h M^2/q \quad \text{for} \ \alpha \in \mathbf{M}_q, \ q\leq\eta^{-1},
\end{equation}
and 
\begin{equation} \label{Smin} |S_M(\alpha)| \leq C\eta M^2 \leq \delta M^2/8 \quad \text{for} \ \alpha \in \mathfrak{m}, 
\end{equation} provided we choose $c_0\leq1/8C.$ We prove and discuss these estimates in detail in Section \ref{expest}.

\noindent By (\ref{Smin}) and Plancherel's Identity (\ref{planch}), we have 
\begin{equation*}\int_{\mathfrak{m}}  |\widehat{f_A}(\alpha)|^2|S_M(\alpha)| d\alpha \leq \delta^2NM^2/8,
\end{equation*}
which by (\ref{mass}) yields
\begin{equation} \label{Mmass} \int_{\mathfrak{M}}  |\widehat{f_A}(\alpha)|^2|S_M(\alpha)| d\alpha \geq \delta^2NM^2/8.
\end{equation} By (\ref{Smaj}) and (\ref{Mmass}) we have 
\begin{equation} \label{almost} \delta^2 NM^2 \ll_h \sum_{q=1}^{\eta^{-1}} \frac{M^2}{q} \int_{\mathbf{M}_q} |\widehat{f_A}(\alpha)|^2 d \alpha. 
\end{equation} 
We then make use of the following proposition, a more general version of which can be found in Proposition 5.6 of \cite{Rice}, which exploits the more inclusive definition of $\mathbf{M}'_q$ as compared with $\mathbf{M}_q$.

\begin{proposition}\label{rstrick} If $\eta^2 N >2Q^2$,  then $$\max_{q\leq Q} \int_{\mathbf{M}'_q}|\widehat{f_A}(\alpha)|^2 d\alpha \geq \frac{1}{2} \sum_{q=1}^Q q^{-1}\int_{\mathbf{M}_q}|\widehat{f_A}(\alpha)|^2 {d}\alpha. $$ 
\end{proposition}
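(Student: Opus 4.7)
The plan is a short double-counting argument, with the efficiency coming from the disjointness of the major arcs. First, I would record that the hypothesis $\eta^2 N > 2Q^2$ combined with (\ref{majdisj}) makes the arcs $\mathbf{M}_r$ pairwise disjoint across distinct $r \le Q$. Writing $I_r := \int_{\mathbf{M}_r}|\widehat{f_A}|^2\,d\alpha$ and $I'_q := \int_{\mathbf{M}'_q}|\widehat{f_A}|^2\,d\alpha$, this disjointness gives the workhorse identity $I'_q = \sum_{r\mid q} I_r$.

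Set $M := \max_{q\le Q} I'_q$ and $S := \sum_{q=1}^Q I_q/q$; the goal is $M \ge S/2$. I would then swap the order of summation to obtain
$$\sum_{q=1}^Q I'_q \;=\; \sum_{r=1}^Q I_r \lfloor Q/r \rfloor.$$
The left side is at most $QM$ by definition of $M$, while via $\lfloor Q/r\rfloor \ge Q/r - 1$ the right side is at least $QS - T$, where $T := \sum_{r=1}^Q I_r$. Rearranging gives $S \le M + T/Q$.

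To close the argument, I would observe that $I_r \le I'_r \le M$ for every $r \le Q$ (since $r \mid r$), so $T \le QM$ and hence $T/Q \le M$, yielding $S \le 2M$. I do not anticipate any real obstacle; the only point that is easy to overlook is this final bound $T/Q \le M$. It is what absorbs the $-1$ in $\lfloor Q/r\rfloor \ge Q/r - 1$ and allows the argument to avoid the $\log Q$ loss one would incur from a naive harmonic weighting or a Mobius-inversion attempt to solve $\sum_{q:\,r\mid q}\lambda_q = 1/r$ exactly.
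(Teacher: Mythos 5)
Your argument is correct and is essentially the paper's own proof: both hinge on the disjointness-enabled swap $\sum_{q\le Q} I'_q = \sum_{r\le Q}\lfloor Q/r\rfloor I_r$ together with the trivial bound $\sum_{q\le Q} I'_q \le Q\max_{q\le Q} I'_q$. The only cosmetic difference is that the paper applies the pointwise inequality $\lfloor Q/r\rfloor \ge Q/(2r)$ (valid for $r\le Q$) directly, whereas you use $\lfloor Q/r\rfloor \ge Q/r - 1$ and then separately absorb the error term $T=\sum_r I_r$ via $I_r\le I'_r\le M$; this reaches the same constant $1/2$.
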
 

\begin{proof} By (\ref{majdisj}) we have \begin{align*} Q \max_{q\leq Q} \int_{\mathbf{M}'_q}|\widehat{f_A}(\alpha)|^2 {d}\alpha &\geq \sum_{q=1}^Q  \int_{\mathbf{M}'_q}|\widehat{f_A}(\alpha)|^2 {d}\alpha \\ &= \sum_{q=1}^Q  \sum_{r|q}  \int_{\mathbf{M}_r}|\widehat{f_A}(\alpha)|^2 {d}\alpha \\ &= \sum_{r=1}^Q \lfloor Q/r\rfloor \int_{\mathbf{M}_r}|\widehat{f_A}(\alpha)|^2 {d}\alpha  \\ &\geq \frac{Q}{2}\sum_{r=1}^Q r^{-1}\int_{\mathbf{M}_r}|\widehat{f_A}(\alpha)|^2 {d}\alpha, 
\end{align*}  and the proposition follows.
\end{proof} \noindent Lemma \ref{sqL2} then follows immediately from (\ref{almost}) and Proposition \ref{rstrick}. \qed

\section{Exponential Sum Estimates} \label{expest}
In this section, we carefully adapt traditional exponential sum estimates in order to establish the crucial upper bounds (\ref{Smaj}) and (\ref{Smin}). For the entirety of the section, we fix a nonzero binary quadratic form $$h(x,y)=b_1x^2+b_2xy+b_3y^2 \in \Z[x,y].$$ Unlike in previous sections, we do not make the perpetual assumption that $\Delta(h)=b_2^2-4b_1b_3\neq 0$, but rather enforce this condition only when necessary.

\subsection{Major arc estimates} We begin our pursuit of (\ref{Smaj}) by establishing an asymptotic formula for the relevant exponential sum near rationals with small denominator. To achieve this goal, we make multiple appeals to the following standard formula, which is simply integration by parts applied to an appropriate Riemann-Stieltjes integral.

\begin{lemma}[Abel's Partial Summation Formula] \label{psf} If $\phi: \R \to \C$ is continuously differentiable, $f:\N \to \C$, $F(x)=\sum_{1\leq n \leq x}f(n)$, and $M>0$, then $$\sum_{1\leq n \leq M}f(n)\phi(n)= F(M)\phi(M)-\int_0^M F(x)\phi'(x) dx.  $$
\end{lemma}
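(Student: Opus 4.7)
The plan is to reduce to integer $M$, then apply discrete summation by parts, and finally translate the resulting finite differences of $\phi$ into integrals using the fundamental theorem of calculus.

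First, I would reduce to the case $M = N \in \N$. If $M \notin \N$, setting $N = \lfloor M \rfloor$, the left-hand side only sees integers $n \leq N$ and so equals the version with $M$ replaced by $N$. On the right-hand side, $F$ is constant equal to $F(N) = F(M)$ on $[N, M]$, so $\int_N^M F(x)\phi'(x)\, dx = F(M)(\phi(M) - \phi(N))$, which yields $F(M)\phi(M) - \int_0^M F(x)\phi'(x)\, dx = F(N)\phi(N) - \int_0^N F(x)\phi'(x)\, dx$. Thus it suffices to establish the identity when $M = N$ is a positive integer.

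For integer $N$, I would apply a telescoping rearrangement. Writing $f(n) = F(n) - F(n-1)$ with the convention $F(0) = 0$, and reindexing one of the resulting sums, yields the discrete summation-by-parts identity
$$\sum_{n=1}^N f(n)\phi(n) = F(N)\phi(N) - \sum_{n=1}^{N-1} F(n)\bigl(\phi(n+1) - \phi(n)\bigr).$$
Since $\phi$ is $C^1$, the fundamental theorem of calculus gives $\phi(n+1) - \phi(n) = \int_n^{n+1} \phi'(x)\, dx$, and since $F$ is constant on $[n, n+1)$ with value $F(n)$, we have $F(n)(\phi(n+1) - \phi(n)) = \int_n^{n+1} F(x)\phi'(x)\, dx$. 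Summing over $n = 1, \dots, N-1$ and using $F \equiv 0$ on $[0, 1)$ to harmlessly extend the range of integration down to $0$, the right-hand sum equals $\int_0^N F(x)\phi'(x)\, dx$, completing the proof.

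There is no substantive obstacle here, as the argument is entirely classical; the only minor bookkeeping is the initial reduction to integer $M$ and carefully aligning the endpoints of integration with the intervals on which the step function $F$ is constant.
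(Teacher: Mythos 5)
Your proof is correct. The paper itself does not supply an argument beyond the parenthetical remark that the formula ``is simply integration by parts applied to an appropriate Riemann--Stieltjes integral'' $\int_0^M \phi(x)\, dF(x)$, so you are taking a genuinely different, more elementary route. You first reduce to integer $M=N$ by observing that $F$ is constant on $[N,M]$ (since $M < N+1$), then apply discrete Abel summation by writing $f(n)=F(n)-F(n-1)$, telescoping, and reindexing to obtain $\sum_{n=1}^N f(n)\phi(n)=F(N)\phi(N)-\sum_{n=1}^{N-1}F(n)\bigl(\phi(n+1)-\phi(n)\bigr)$, and finally convert each difference $\phi(n+1)-\phi(n)$ to $\int_n^{n+1}\phi'$ and use that $F$ is the constant $F(n)$ on $[n,n+1)$ to absorb it into $\int_n^{n+1}F\phi'$; the range extends down to $0$ for free since $F\equiv 0$ on $[0,1)$. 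Each step checks out, including the measure-zero endpoint subtlety. The tradeoff is standard: the Riemann--Stieltjes viewpoint is slicker and extends immediately to more general weight functions, while your telescoping argument requires no measure-theoretic machinery and is entirely self-contained, in keeping with the expository aims the paper announces.
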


\noindent We now proceed with the asymptotic formula, obtained by applying Lemma \ref{psf} one variable at a time.

\begin{lemma} \label{majform} If $a,q \in \N$, $\alpha=a/q+\beta$, and $M>0$, then $$S_M(\alpha)= \sum_{1\leq m,n \leq M} e^{2\pi i h(m,n)\alpha} = q^{-2}G(a,q)\int_{0}^M\int_{0}^M e^{2\pi ih(x,y)\beta} dxdy +O(qM(1+JM^2\beta)), $$ where $J=|b_1|+|b_2|+|b_3|$ and $$G(a,q)=\sum_{r,s=0}^{q-1} e^{2\pi i h(r,s)a/q}.$$

\end{lemma}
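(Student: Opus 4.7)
The plan is to decompose $S_M(\alpha)$ according to residues modulo $q$ and then convert each residue-restricted double sum into a Riemann integral via two applications of Abel's formula (Lemma \ref{psf}), one in each variable.

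Since $h$ is a binary quadratic form over $\Z$, congruences $m \equiv r$ and $n \equiv s \pmod q$ force $h(m,n) \equiv h(r,s) \pmod q$, and hence
\begin{equation*} e^{2\pi i h(m,n)\alpha} = e^{2\pi i h(r,s) a/q}\, e^{2\pi i h(m,n)\beta}. \end{equation*}
Grouping by residues gives
\begin{equation*} S_M(\alpha) = \sum_{r,s=0}^{q-1} e^{2\pi i h(r,s) a/q}\, T_M(r,s;\beta), \quad \text{where} \quad T_M(r,s;\beta) = \sum_{\substack{1\leq m,n \leq M \\ m\equiv r,\, n\equiv s\,(q)}} e^{2\pi i h(m,n)\beta}. \end{equation*}
The task thus reduces to approximating each $T_M(r,s;\beta)$ by $q^{-2}\int_0^M\!\!\int_0^M e^{2\pi i h(x,y)\beta}\,dx\,dy$ with a suitably small error.

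For this I would invoke Lemma \ref{psf} first in $m$, taking $\phi(x) = e^{2\pi i h(x,n)\beta}$ and the counting function $F(x) = |\{1 \leq m \leq x : m \equiv r \pmod q\}| = x/q + O(1)$. A short computation (writing $F = x/q + O(1)$ and performing an additional integration by parts on the $x/q$ piece) reduces the inner $m$-sum to $\frac{1}{q}\int_0^M e^{2\pi i h(x,n)\beta}\,dx$ with error controlled by $|\phi(M)| + \int_0^M |\phi'(x)|\,dx$. Since $\partial_x h(x,y) = 2b_1 x + b_2 y$ has magnitude at most $2JM$ on $[0,M]^2$, this error is $O(1 + JM^2|\beta|)$ uniformly in $n$. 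A second application of Lemma \ref{psf} to the resulting function $\psi(y) = \frac{1}{q}\int_0^M e^{2\pi i h(x,y)\beta}\,dx$ of $n$, using the analogous bound $|\partial_y h(x,y)| \leq 2JM$, produces the full double integral and a further error of $O((M/q)(1+JM^2|\beta|))$, yielding
\begin{equation*} T_M(r,s;\beta) = \frac{1}{q^2}\int_0^M\!\!\int_0^M e^{2\pi i h(x,y)\beta}\,dx\,dy + O\!\left(\tfrac{M}{q}(1+JM^2|\beta|)\right). \end{equation*}
Multiplying by the unit-modulus weights $e^{2\pi i h(r,s)a/q}$ and summing the $q^2$ residue pairs produces the main term $q^{-2}G(a,q)\iint$, together with an aggregated error of $O(qM(1+JM^2|\beta|))$ in the relevant range $q \leq M$; the complementary case $q > M$ is trivial from the pointwise bound $|S_M| \leq M^2 \leq qM$.

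I do not anticipate any real obstacle, as this is essentially a standard Euler--Maclaurin comparison. The only care required is bookkeeping the Abel errors: treating $n$ as an inert parameter during the first summation, and then handling the second summation uniformly in $s$. The estimate relies only on the polynomial structure of $h$ and the uniform bound on its partial derivatives; no hypothesis on $\Delta(h)$ is required, consistent with the remark preceding the lemma.
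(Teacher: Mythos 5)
Your proof is correct and follows essentially the same approach as the paper: two applications of Abel's summation formula (Lemma \ref{psf}), one variable at a time, combined with a residue decomposition mod $q$ and the observation that $h(m,n)\equiv h(r,s)\pmod q$ whenever $m\equiv r$ and $n\equiv s$. The only difference is organizational --- you extract the rational phases $e^{2\pi i h(r,s)a/q}$ at the outset and then estimate each $T_M(r,s;\beta)$ directly, whereas the paper interleaves the residue decomposition with the Abel steps by using the partial rational exponential sums $S^m_y(a/q)$ and $\tilde{S}_x(a/q)$ as the counting functions $F$; the resulting error bookkeeping is the same.
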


\begin{proof} For each fixed $1\leq m \leq M$ and $y>0$, we see that \begin{equation}\label{smy} S^m_y(a/q)=\sum_{1\leq n \leq y} e^{2\pi i h(m,n)a/q}=\sum_{s=0}^{q-1}e^{2\pi i h(m,s)a/q} \left| \{1\leq n \leq y \ : \ n\equiv s \text{ mod } q\}\right|=\frac{y}{q}G_m(a,q)+O(q), \end{equation} where $$ G_m(a,q)=\sum_{s=0}^{q-1}e^{2\pi i h(m,s)a/q}.$$
Then, letting $h_y=\frac{\partial h}{\partial y}$ and successively applying Lemma \ref{psf}, (\ref{smy}), and integration by parts, we have \begin{align*}S^m_M(\alpha) &=\sum_{1\leq n \leq M} e^{2\pi i h(m,n)a/q}e^{2\pi i h(m,n)\beta}\\ &= S^m_M(a/q)e^{2\pi i h(m,M)\beta}-\int_0^M S^m_y(a/q) (2\pi i h_y(m,y)\beta) e^{2\pi i h(m,y)\beta}dy \\ &= q^{-1}G_m(a,q)\left(Me^{2\pi i h(m,M)\beta}-\int_0^M y2\pi ih_y(m,y)\beta e^{2\pi i h(m,y)\beta}dy \right)+O(q(1+JM^2\beta)) \\ &=q^{-1}G_m(a,q)\int_0^M e^{2\pi i h(m,y)\beta}dy +O(q(1+JM^2\beta)).\end{align*} Similarly, summing in $m$ we have \begin{equation*} \tilde{S}_x(a/q)=\sum_{1\leq m \leq x} G_m(a,q)= \sum_{r=0}^{q-1}G_r(a,q)\left| \{1\leq m \leq x \ : \ m\equiv r \text{ mod } q\}\right|=\frac{x}{q}G(a,q)+O(q), \end{equation*} and, letting $h_x=\frac{\partial h}{\partial x}$, we apply the same sequence of steps to see that $S_M(\alpha)$ equals \begin{align*}&q^{-1}\sum_{1\leq m \leq M}G_m(a,q) \int_0^M e^{2\pi i h(m,y)\beta}dy +O(qM(1+JM^2\beta)) \\  =  &q^{-1} \left(\tilde{S}_M(a/q)\int_0^M e^{2\pi i h(M,y)\beta}dy-\int_0^M\int_0^M \tilde{S}_x(a/q)(2\pi i h_x(x,y)\beta)e^{2\pi i h(x,y)\beta} dxdy \right)+O(qM(1+JM^2\beta)) \\ =  & q^{-2}G(a,q)\left(M\int_0^M e^{2\pi i h(M,y)\beta}dy-\int_0^M\int_0^M x(2\pi i h_x(x,y)\beta)e^{2\pi i h(x,y)\beta}dxdy \right) +O(qM(1+JM^2\beta)) \\ =  & q^{-2}G(a,q)\int_{0}^M\int_{0}^M e^{2\pi ih(x,y)\beta} dxdy +O(qM(1+JM^2\beta)),\end{align*} and the formula is established.
\end{proof} 

\noindent The crucial denominator $q$ in (\ref{Smaj}) comes from the following result, previously discussed in Section \ref{boundsec}, which is the one and only juncture at which we require $\Delta(h)\neq 0$. This key ingredient, as well as the standard proof we recreate for Lemma \ref{weylbin}, rely on a technique known as \textit{Weyl differencing}, in which we take the modulus squared of the exponential sum in order to reduce the quadratic dependence in each variable to a linear dependence.
\begin{lemma} \label{gauss2} If $\Delta(h)\neq 0$ and $a,q\in \N$ with $(a,q)=1$, then $$\left|\sum_{r,s=0}^{q-1} e^{2\pi i h(r,s)a/q}\right| \ll_{h} q. $$

\end{lemma}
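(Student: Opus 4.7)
The plan is to apply Weyl differencing to the two-variable Gauss-type sum $G(a,q)$, reducing the quadratic phase $h(r,s)a/q$ to a pair of linear phases in two auxiliary variables. This is a direct extension of the standard single-variable argument that underlies estimate (\ref{novel1}), with the crucial algebraic input being that the Hessian matrix of $h$ is nonsingular mod $q$ in a controlled sense, which is precisely where the hypothesis $\Delta(h)\neq 0$ enters.

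First, I would write
$$|G(a,q)|^2 = \sum_{r,r',s,s'=0}^{q-1} e^{2\pi i (h(r,s)-h(r',s'))a/q},$$
and make the change of variables $r=r'+u$, $s=s'+v$ with $u,v$ ranging over $\Z/q\Z$. A direct expansion gives
$$h(r'+u,s'+v)-h(r',s') = r'(2b_1 u + b_2 v) + s'(b_2 u + 2b_3 v) + h(u,v).$$
Second, I would apply orthogonality in the $r',s'$ variables. Since $(a,q)=1$, the inner double sum over $r',s'$ equals $q^2$ precisely when
$$2b_1 u + b_2 v \equiv 0 \pmod q \quad \text{and} \quad b_2 u + 2b_3 v \equiv 0 \pmod q,$$
and vanishes otherwise. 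Taking absolute values and discarding the remaining phase $e^{2\pi i h(u,v)a/q}$ by the triangle inequality yields
$$|G(a,q)|^2 \leq q^2 \cdot N_q,$$
where $N_q$ counts solutions $(u,v) \in (\Z/q\Z)^2$ of the above homogeneous linear system.

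The main obstacle is to bound $N_q$ uniformly in $q$. The coefficient matrix
$$M = \begin{pmatrix} 2b_1 & b_2 \\ b_2 & 2b_3 \end{pmatrix}$$
has determinant $4b_1 b_3 - b_2^2 = -\Delta(h)$, which is nonzero by hypothesis. I would then invoke the Smith normal form $M = U\operatorname{diag}(d_1,d_2)V$ with $U,V \in GL_2(\Z)$ and $d_1 d_2 = |\Delta(h)|$. Since $U,V$ are invertible mod $q$, the number of solutions in $(\Z/q\Z)^2$ equals $\gcd(d_1,q)\cdot\gcd(d_2,q) \leq d_1 d_2 = |\Delta(h)|$. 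Substituting into the Weyl-differenced bound gives $|G(a,q)|^2 \leq |\Delta(h)|\, q^2$, and extracting a square root produces $|G(a,q)| \ll_h q$, as required. The necessity of $\Delta(h)\neq 0$ is transparent from this argument: if $\Delta(h)=0$, the system above degenerates to a single linear congruence, whose solution set in $(\Z/q\Z)^2$ has size of order $q$, yielding only the trivial bound $|G(a,q)|\ll q^{3/2}$.
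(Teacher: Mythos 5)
Your proof is correct and takes essentially the same route as the paper: both square $G(a,q)$, Weyl-difference via the shift $(r,s)\mapsto(r'+u,s'+v)$, and apply orthogonality in $r',s'$ to reduce the problem to counting solutions of the linear system $2b_1u+b_2v\equiv b_2u+2b_3v\equiv 0\pmod q$. Your Smith-normal-form count $\gcd(d_1,q)\gcd(d_2,q)\leq |\Delta(h)|$ is a slightly cleaner and more robust finish than the paper's direct elimination (multiply the congruences by $b_2$ and $2b_1$, subtract to get $q\mid \Delta(h)u$, then bound the remaining freedom in $t$), but the underlying idea is identical.
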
 

\begin{proof} Fixing $a,q \in \N$ with $(a,q)=1$, exploiting that $|z|^2=z\bar{z}$ for any $z\in \C$, and changing variables $r'=r+t$, $s'=s+u$, we see that  \begin{align*} \left|\sum_{r,s=0}^{q-1} e^{2\pi i h(r,s)a/q}\right|^2 &= \sum_{r,r',s,s'=0}^{q-1}e^{2\pi i (h(r',s')-h(r,s))a/q} \\ &=\sum_{r,s,t,u=0}^{q-1}e^{2\pi i (h(r+t,s+u)-h(r,s))a/q} \\ &= \sum_{r,s,t,u=0}^{q-1} e^{2\pi i (2b_1rt+b_1t^2+b_2ru+b_2st+b_2tu+2b_3su+b_3u^2)a/q} \\ &= \sum_{t,u=0}^{q-1}e^{2\pi i h(t,u)a/q} \left(\sum_{r=0}^{q-1} e^{2\pi i (2b_1t+b_2u)ra/q}\right) \left(\sum_{s=0}^{q-1} e^{2\pi i (b_2t+2b_3u)sa/q} \right) \\ &= \sum_{t,u=0}^{q-1} e^{2\pi i h(t,u)a/q} \begin{cases} q^2 &\text{if } 2b_1t+b_2u \equiv b_2t+2b_3u \equiv 0 \text{ mod }q \\ 0 & \text{else} \end{cases}, \end{align*} where the last equality follows from the orthogonality relation \begin{equation*}\sum_{r=0}^{q-1}e^{2\pi i rb/q}=\begin{cases} q &\text{if } q\mid b \\ 0 &\text{else}\end{cases}. \end{equation*} Looking at the two congruence conditions above, multiplying the first expression by $b_2$, and multiplying the second expression by $2b_1$,  we get the system $$2b_1b_2t+b_2^2u\equiv 2b_1b_2t+4b_1b_3u \equiv 0 \text{ mod }q.$$ By subtracting the two resulting expressions we see that $q$ must divide $\Delta(h)u$. Letting $d=\gcd(q,\Delta(h))$, we have that $u$ must be one of the $d$ multiples of $q/d$, which each yield at most $\gcd(q,2b_1b_2)$ choices for $t$. In particular, if $\Delta(h)\neq 0$, then the number of simultaneous solutions is $O_h(1)$, and the lemma follows.
\end{proof} 

\subsection{Proof of (\ref{Smaj})} Returning to the setting of the proof of Lemma \ref{sqL2}, if $\alpha \in \mathbf{M}_q$ with $$q\leq \eta^{-1} \ll_h \delta^{-1} \leq N^{1/20} \ll M^{1/10},$$ then $\alpha=a/q+\beta$ with $$|\beta| < \frac{1}{\eta^2N} \ll_h N^{-9/10} \ll M^{-9/5}$$ for some $a$ with $(a,q)=1$. In this case, Lemma \ref{majform} tells us that $$S_M(\alpha)=q^{-2}G(a,q)\int_{0}^M\int_{0}^M e^{2\pi ih(x,y)\beta} dxdy+O_h(M^{1.3}). $$ Applying Lemma \ref{gauss2} and trivially bounding the double integral by $M^2$, we have $$|S_M(\alpha)|\ll_h M^2/q,$$ as claimed in (\ref{Smaj}). \qed

\subsection{Minor arc estimates} We begin our pursuit of (\ref{Smin}) with the following standard oscillatory integral estimate, which will allow us to exhibit (\ref{Smin}) in the case that $\alpha$ is fairly close to a rational with small denominator, but not so close as to lie in the major arcs. 

\begin{lemma}[Van der Corput's Lemma for Quadratic Polynomials] \label{vdc} If $g(x)=x^2+bx+c\in \R[x]$ and $I \subseteq \R$ is an interval, then $$\left|\int_I e^{2\pi i g(x)\beta} dx \right| \ll |\beta|^{-1/2}.  $$

\end{lemma}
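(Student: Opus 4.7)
My plan is to reduce to a pure quadratic phase by completing the square, then further reduce to a universal oscillatory integral via a scaling change of variables, and finally bound that universal integral by splitting into a ``bounded region'' piece and an ``integration by parts'' piece.

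First, I would write $g(x) = (x+b/2)^2 + (c-b^2/4)$, so that $e^{2\pi i g(x)\beta} = e^{2\pi i (c-b^2/4)\beta} \cdot e^{2\pi i (x+b/2)^2 \beta}$. The first factor is a constant of modulus $1$ and drops out when we take absolute values. After substituting $y = x + b/2$, the integral becomes $\int_{I'} e^{2\pi i y^2 \beta}\,dy$ over some interval $I'$. Substituting $u = y\sqrt{|\beta|}$ (and, if $\beta<0$, replacing $u$ by $-u$) reduces the task to showing
\begin{equation*}
\sup_{J \subseteq \R\text{ interval}} \left| \int_J e^{2\pi i u^2}\,du \right| \ll 1,
\end{equation*}
since this supremum times $|\beta|^{-1/2}$ is the desired bound.

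To prove this universal bound, I would split $J = J_0 \cup J_1$ where $J_0 = J \cap [-1,1]$ and $J_1 = J \setminus J_0$. The contribution from $J_0$ is bounded in modulus by $|J_0| \leq 2$. The set $J_1$ is a union of at most two intervals on which $|u| \geq 1$, so the phase $u^2$ has nonvanishing derivative there. On each such subinterval $[a,b]$, I would integrate by parts using the identity
\begin{equation*}
e^{2\pi i u^2} = \frac{1}{4\pi i u} \cdot \frac{d}{du}\bigl(e^{2\pi i u^2}\bigr),
\end{equation*}
yielding
\begin{equation*}
\int_a^b e^{2\pi i u^2}\,du = \left[\frac{e^{2\pi i u^2}}{4\pi i u}\right]_a^b + \int_a^b \frac{e^{2\pi i u^2}}{4\pi i u^2}\,du.
\end{equation*}
The boundary terms are $O(1)$ because $|u| \geq 1$ on $J_1$, and the remainder integral is bounded in modulus by $\frac{1}{4\pi}\int_{|u|\geq 1} u^{-2}\,du = O(1)$. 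Combining the two pieces yields the required uniform bound.

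The only mildly delicate point is the behavior near $u=0$, where the phase is stationary and integration by parts would fail; this is precisely why I carve out the interval $[-1,1]$ and estimate that portion trivially by its length. Everything else is routine, and the constants are absolute, so the implied constant in the conclusion is independent of $b$, $c$, and $I$.
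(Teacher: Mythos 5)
Your argument is correct and essentially identical to the paper's: complete the square, cut out the region around the stationary point and bound its contribution trivially by its measure, then integrate by parts on the complement where the phase derivative is bounded below. Your rescaling $u = y\sqrt{|\beta|}$ to reach a universal normalized integral is merely a cosmetic renormalization of the paper's $\beta$-dependent cutoff at $|y|=|\beta|^{-1/2}$; the one small slip is that for $\beta<0$ replacing $u$ by $-u$ does not flip the sign of $u^2$ (it is even), so you should instead take complex conjugates, which of course preserves the modulus.
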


\begin{proof} Fix $g(x)=x^2+bx+c \in \R[x]$ and an interval $I\subseteq \R$, and let $E=(I+b/2)\cap\{x: |x|\geq |\beta|^{-1/2}\}$, where $I+b/2$ denotes the translation of the interval $I$ by $b/2$.  We know that the measure of $(I+b/2)\setminus E$ is at most $2|\beta|^{-1/2}$, so we complete the square and change variables to see that \begin{align*}\left|\int_I e^{2\pi i g(x)\beta} dx\right| &= \left|\int_I e^{2\pi i ((x+b/2)^2-b^2/4+c)\beta} dx\right| \\ &= \left|\int_I e^{2\pi i (x+b/2)^2\beta} dx\right| \\ &= \left|\int_{I+b/2} e^{2\pi i y^2\beta} dy\right| \\ & \ll |\beta|^{-1/2}+ \left|\int_{E} e^{2\pi i y^2\beta} dy\right|. \end{align*}  Writing $$e^{2\pi i y^2\beta}= \frac{1}{4\pi i y \beta} \frac{d}{dx}(e^{2\pi i y^2\beta}), $$ we have by integration by parts that \begin{align*}\int_{E} e^{2\pi i y^2\beta} dy =  \left[\frac{e^{2\pi i y^2\beta}}{4\pi i y \beta} \right] + \int_{E} \frac{e^{2\pi i y^2\beta}}{4\pi i y^2 \beta} dy,\end{align*} where the expression in brackets is appropriately evaluated at endpoints of $E$.  By construction, $|y|\geq |\beta|^{-1/2}$ at each endpoint of $E$, and hence $$ \left|\int_{E} e^{2\pi i y^2\beta} dy \right| \ll |\beta|^{-1/2} + |\beta|^{-1}\int_{|y|\geq|\beta|^{-1/2}} \frac{1}{y^2} dy \ll |\beta|^{-1/2}, $$ which establishes the desired estimate.
\end{proof}  
\noindent With regard to estimating the double integral in the conclusion of Lemma \ref{majform}, since we assumed $h$ was not identically zero, we can relabel or make a linear change of variables to reduce to the case where $b_1\neq 0$.  Then, by applying Lemma \ref{vdc} to the integral in $x$ for every fixed $y$, we immediately get the following estimate.
\begin{corollary} \label{vdc2} If $M>0$, then \begin{equation} \label{vdc2e} \left|\int_0^M\int_0^M e^{2\pi i h(x,y)\beta} dx dy\right| \ll_{h}  M|\beta|^{-1/2}. \end{equation} 

\end{corollary}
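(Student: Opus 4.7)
The plan is to apply Lemma \ref{vdc} to the inner integral for each fixed $y$, after a harmless preparatory reduction that guarantees a genuine quadratic dependence in $x$. The outer integral then contributes only the factor $M$, which matches the target bound.

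First I would handle the reduction to the case $b_1\neq 0$. Since $h$ is nonzero, at least one of $b_1,b_2,b_3$ is nonzero. If $b_1=0$ but $b_3\neq 0$, swap the roles of $x$ and $y$, which leaves the double integral invariant. If $b_1=b_3=0$ and $b_2\neq 0$, perform the substitution $x\mapsto x+y$, $y\mapsto y$; this turns $b_2 xy$ into $b_2 xy+b_2 y^2$, giving a nonzero coefficient on $x^2$ after a second trivial relabeling, and it only shears the square $[0,M]^2$ into a parallelogram, whose inner $x$-slices are still intervals of length $\leq M$ (or one can split $[0,M]^2$ into finitely many pieces on which the substitution takes intervals to intervals). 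Either way, we reduce matters to a polynomial of the form $h(x,y)=b_1 x^2+b_2 xy+b_3 y^2$ with $b_1\neq 0$, and at the cost of an $O_h(1)$ multiplicative loss we may assume the region of integration is still a product of two intervals each contained in an interval of length $\ll_h M$.

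Next I would fix $y\in[0,M]$ and examine the inner integral
\[
I(y)=\int_0^M e^{2\pi i h(x,y)\beta}\,dx=\int_0^M e^{2\pi i\,b_1\bigl(x^2+(b_2 y/b_1)x+(b_3 y^2/b_1)\bigr)\beta}\,dx.
\]
The exponent is $2\pi i\,g_y(x)\,(b_1\beta)$ with $g_y(x)=x^2+(b_2/b_1)y\,x+(b_3/b_1)y^2$ a monic quadratic in $x$, exactly the form required by Lemma \ref{vdc}. Applying that lemma with frequency parameter $b_1\beta$ yields
\[
|I(y)|\ll |b_1\beta|^{-1/2}\ll_h |\beta|^{-1/2},
\]
uniformly in $y$.

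Finally, integrating in $y$ over an interval of length $\leq M$ and absorbing the $O_h(1)$ losses from the preparatory reduction gives
\[
\left|\int_0^M\int_0^M e^{2\pi i h(x,y)\beta}\,dx\,dy\right|\leq \int_0^M |I(y)|\,dy\ll_h M|\beta|^{-1/2},
\]
which is the claimed bound. There is no real obstacle here; the only point requiring a moment's care is the reduction step when $b_1=0$, since we need the inner variable to carry a nondegenerate square so that van der Corput's lemma can be invoked. Everything else is a direct application of Lemma \ref{vdc} followed by trivial integration in the outer variable.
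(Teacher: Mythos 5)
Your proof is essentially identical to the paper's: the paper also reduces to the case $b_1\neq 0$ by a relabeling or linear change of variables, then applies Lemma \ref{vdc} to the inner integral in $x$ for each fixed $y$ and integrates trivially in $y$. You have simply spelled out the degenerate-coefficient reduction (and the harmless shearing of the domain) more explicitly than the paper does.
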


\noindent For our final ingredient, we turn to the following traditional estimate, which we  utilize to establish (\ref{Smin}) when $\alpha$ is close to a denominator that is neither too small nor too large.

\begin{lemma}[Weyl's Inequality for Quadratic Polynomials] \label{weylbin} Suppose $g(x)=bx^2+cx+d \in \R[x]$, $b\in \N$, $a,q\in \N$, $t\geq 1$, and $x>0$. If $(a,q)=1$ and $|\alpha-a/q|\leq tq^{-2}$, then $$\left|\sum_{1\leq n \leq x}e^{2\pi i g(n) \alpha}\right| \ll \left( bx\log q+tx+btx^2/q+q\log q \right)^{1/2}.$$

\end{lemma}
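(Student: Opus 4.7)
The plan is to apply the classical Weyl differencing technique, which trades squaring the sum for reducing the quadratic phase to a linear one. Denote $S = \sum_{1 \leq n \leq x} e^{2\pi i g(n)\alpha}$. Computing $|S|^2$ as a double sum and substituting $m = n + h$, one uses $g(n+h) - g(n) = 2bhn + (bh^2 + ch)$, which is linear in $n$. The inner sum over $n$ runs over an interval of at most $x$ integers and is a geometric progression, bounded by $\min(x, \|2bh\alpha\|^{-1})$ up to constants. Isolating the $h = 0$ contribution gives
\[
|S|^2 \ll x + \sum_{h=1}^{\lceil x \rceil} \min(x, \|2bh\alpha\|^{-1}).
\]

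To absorb the factor $2b$ inside the norm, I would enlarge the range of summation wastefully, passing from the arithmetic progression of multiples of $2b$ to all integers $k \leq 2bx$:
\[
\sum_{h=1}^{\lceil x \rceil} \min(x, \|2bh\alpha\|^{-1}) \leq \sum_{k=1}^{2b\lceil x \rceil} \min(x, \|k\alpha\|^{-1}).
\]
This costs a factor of $2b$ in the summation length, which will be the source of the leading $b$ in the final estimate. Next, apply the classical Vinogradov-type lemma: under $(a,q) = 1$ and $|\alpha - a/q| \leq t/q^2$, for any $Y \geq 1$,
\[
\sum_{k=1}^{Y} \min(x, \|k\alpha\|^{-1}) \ll (Y/q + 1)(tx + q\log q).
\]
I would prove this by partitioning $[1, Y]$ into blocks of length $q$: within each block, the residues $ka \bmod q$ form a permutation of $\{0, 1, \ldots, q-1\}$, and since the perturbation $k\beta$ (with $\beta = \alpha - a/q$) varies by at most $q|\beta| \leq t/q$ across a single block, there are only $O(t)$ ``bad'' indices per block where $\|k\alpha\|$ falls within $O(t/q)$ of an integer (each contributing $\leq x$), while the remaining indices produce a harmonic sum $O(q\log q)$. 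Setting $Y = 2b\lceil x \rceil$ and combining yields
\[
|S|^2 \ll x + (bx/q + 1)(tx + q\log q) \ll bx\log q + tx + btx^2/q + q\log q,
\]
and taking square roots delivers the claimed estimate.

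The main obstacle is the careful execution of the Vinogradov-type lemma with the parameter $t$: one must observe that the variation of $k\beta$ across any single block of length $q$ is always $\leq t/q$, independent of which block one is in, so the $O(t)$ count of bad indices and the $O(q\log q)$ harmonic contribution hold uniformly in the block index rather than deteriorating with block position. A secondary point to verify is that the wasteful extension from multiples of $2b$ to all integers of size $\leq 2bx$ accounts for exactly the factor $b$ appearing in the target bound, so that no further cleverness (such as changing variables to $\alpha' = 2b\alpha$ and working with $q' = q/\gcd(2b,q)$) is needed.
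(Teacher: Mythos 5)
Your proof is correct and follows essentially the same route as the paper: Weyl differencing of $|S|^2$, reducing the inner geometric sum to $\min(x,\|2bh\alpha\|^{-1})$, wastefully enlarging the summation range from multiples of $2b$ to all integers up to $2bx$, and then the block-of-length-$q$ counting argument where $(a,q)=1$ makes $sa$ run over all residues and the $O(t/q)$ perturbation produces $O(t)$ bad indices per block. The paper performs the identical steps, so nothing to add.
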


\begin{proof} Letting $S$ denote the exponential sum we wish to estimate, we see that \begin{equation}\label{weylf} |S|^2 = \sum_{1\leq n,n' \leq x} e^{2\pi i (h(n')-h(n))\alpha} = x+ 2\Re\left(\sum_{1\leq n<n' \leq x}  e^{2\pi i (h(n')-h(n))\alpha}\right), \end{equation} where the $x$ accounts for terms where $n=n'$, and $\Re$ denotes the real part. With a change of variables $n'=n+h$, we have \begin{align*}\sum_{1\leq n<n' \leq x}  e^{2\pi i (h(n')-h(n))\alpha} & = \sum_{1\leq n \leq x-1} \sum_{1\leq h \leq x-n} e^{2\pi i (h(n+h)-h(n))\alpha}  \\ & =\sum_{1\leq n \leq x-1} \sum_{1\leq h \leq x-n} e^{2\pi(2bnh+h^2+ch)\alpha} \\ &= \sum_{1\leq h \leq x-1} e^{2\pi i (h^2+ch)\alpha} \sum_{1\leq n \leq x-h} e^{2\pi i(2bhn)\alpha}. \end{align*} Applying the geometric series formula to the inner sum and the triangle inequality gives us \begin{equation}\label{weylnext} \left|\sum_{1\leq n<n' \leq x}  e^{2\pi i (h(n')-h(n))\alpha} \right| \ll \sum_{1\leq h \leq x} \min \left\{x, \norm{2bh\alpha}^{-1} \right\} \leq \sum_{1\leq h \leq 2bx} \min \left\{x, \norm{h\alpha}^{-1} \right\}, \end{equation} where $\norm{\cdot}$ denotes the distance to the nearest integer. 
 
\noindent Fixing $q\in \N$ and breaking the sum in $h$ into intervals of length $q$, we have \begin{equation}\label{break}\sum_{1\leq h \leq 2bx} \min \left\{x, \norm{h\alpha}^{-1} \right\} \leq \sum_{1 \leq j \leq 2bx/q} \sum_{s=0}^{q-1} \min\left\{ x, {\norm{(qj+s)\alpha}}^{-1} \right\}.\end{equation} If $a \in \N$ with $|\alpha-a/q|\leq tq^{-2}$, we can write $\alpha=a/q+O(t/q^2)$, and hence \begin{equation*} (qj+s)\alpha= qj\alpha+\frac{sa}{q}+O(t/q).\end{equation*} Further, if we let $k$ be the nearest integer to $q^2j\alpha$, then $qj\alpha = k/q +O(t/q)$ and hence \begin{equation*} (qj+s)\alpha= \frac{sa+k}{q}+O(t/q). \end{equation*} Combined with (\ref{break}), this yields \begin{equation} \label{last} \sum_{1\leq h \leq 2bx} \min \left\{x, \norm{h\alpha}^{-1} \right\} \leq \sum_{1 \leq j \leq 2bx/q} \sum_{s=0}^{q-1} \min\left\{ x, \norm{\frac{sa+k}{q}+O(t/q)}^{-1}\right\}. \end{equation} If $(a,q)=1$, then as $s$ runs over all congruence classes modulo $q$, so does $sa$. In particular, the $O(t/q)$ error term dominates for at most $O(t)$ terms, and we have \begin{align*} \sum_{1 \leq j \leq 2bx/q} \sum_{s=0}^{q-1} \min\left\{ x, \norm{\frac{sa+k}{q}+O(t/q)}^{-1}\right\} \ll \sum_{1 \leq j \leq 2bx/q} \left(tx + \sum_{s=1}^{q/2} \frac{q}{s}\right) \ll (2bx/q+1)(tx+q\log q), \end{align*} which combines with (\ref{weylf}), (\ref{weylnext}), and (\ref{last}) to yield the desired estimate. 
\end{proof}

\noindent In the same way we deduce Corollary \ref{vdc2} from Lemma \ref{vdc}, we reduce to the case of $b_1\neq 0$ and apply Lemma \ref{weylbin} to the sum in $m$ for every fixed $n$ to immediately get the following estimate.

\begin{corollary} \label{weylbin2} Suppose $a,q\in \N$, $\alpha \in [0,1]$, and $x>0$. If $(a,q)=1$ and $|\alpha-a/q|\leq q^{-2}$, then \begin{equation} \label{weylbin2e} \left|\sum_{1\leq m,n \leq x}e^{2\pi i h(m,n)\alpha}\right| \ll_h x\left(x\log q+x^2/q+q\log q\right)^{1/2}.\end{equation}

\end{corollary}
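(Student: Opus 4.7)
The plan is to carry out exactly the same reduction used by the author to pass from Lemma \ref{vdc} to Corollary \ref{vdc2}: first ensure that the $x^2$-coefficient $b_1$ is nonzero, then for each fixed $n$ bound the inner sum in $m$ by the one-variable Weyl estimate (Lemma \ref{weylbin}), and finally sum in $n$ via the triangle inequality.

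For the reduction, since $h$ is nonzero we may assume $b_1\neq 0$: if $b_1 = 0$ and $b_3 \neq 0$ we simply swap the roles of $m$ and $n$, and if $b_1 = b_3 = 0$ (so $h(x,y) = b_2 xy$ with $b_2 \neq 0$) the linear substitution $(m,n)\mapsto(m, n+m)$ rewrites the form as $b_2 m^2 + b_2 mn$, which now has nonzero $m^2$-coefficient. Under this substitution the index set $[1,x]^2$ becomes a parallelogram, but for each value of the outer variable the remaining inner variable still ranges over a contiguous interval of length $O(x)$, so applying the estimate below on each such slice introduces only a constant multiplicative loss that is absorbed into the $\ll_h$.

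With $b_1\neq 0$, fix $n\in[1,x]$ and view $h(m,n) = b_1 m^2 + (b_2 n)\,m + b_3 n^2$ as a one-variable quadratic in $m$ with integer leading coefficient $b_1$. The hypotheses $(a,q)=1$ and $|\alpha-a/q|\leq q^{-2}$ are precisely those of Lemma \ref{weylbin} with $t=1$, so that lemma yields
$$\Big|\sum_{1\leq m\leq x} e^{2\pi i h(m,n)\alpha}\Big|\ll_h\bigl(x\log q + x + x^2/q + q\log q\bigr)^{1/2},$$
uniformly in $n$, with the factor $|b_1|$ absorbed into $\ll_h$. For $q\geq 2$ the stray $x$ is dominated by $x\log q$, while for $q=1$ the claimed bound collapses to the trivial $O(x^2)$. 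Summing the triangle inequality over $n\in[1,x]$ produces the factor of $x$ in front and gives (\ref{weylbin2e}).

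I do not foresee any real obstacle: the substantive content is already packaged in Lemma \ref{weylbin}, and the argument here is purely bookkeeping on top of it. The only point meriting care is the degenerate case $h = b_2 xy$ in the reduction step, where one has to check that the change-of-variables does not distort the summation region in an essential way; as noted, this costs only a constant. Consistent with the author's remark, nothing in this corollary requires $\Delta(h)\neq 0$, since that hypothesis enters the overall argument exclusively through Lemma \ref{gauss2}.
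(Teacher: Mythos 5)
Your proposal is correct and follows exactly the same route as the paper: reduce to $b_1\neq 0$ by relabeling or a linear change of variables, apply Lemma \ref{weylbin} (with $t=1$, the coefficient $b_1$ absorbed into $\ll_h$) to the inner sum in $m$ for each fixed $n$, and sum trivially in $n$. The only thing you add is an explicit treatment of the degenerate case $b_1=b_3=0$ and of the extraneous $+x$ term from the lemma, both of which the paper leaves implicit.
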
 

\noindent \textbf{Remark.} We note that under the assumption $\Delta(h)\neq 0$, the estimates (\ref{vdc2e}) and (\ref{weylbin2e}) can be improved to $|\beta|^{-1}$ and $$\left(x^4/q^2+(x^3/q+x^2+qx)\log q \right)^{1/2},$$ respectively. For the former, since it is in a continuous setting, one can simply use that if $b^2-4ac\neq 0$, then $$ax^2+bxy+cy^2= u^2+v^2$$ after an invertible linear change of variables, and then apply Lemma \ref{vdc} separately in $u$ and $v$. The latter estimate can be established by mimicking the two-variable Weyl differencing process, and exploitation of nonzero discriminant, exhibited in the proof of Lemma \ref{gauss2}. However, for the purposes of proving Theorem \ref{main}, we only require this sort of ``optimal cancellation" on the major arcs, so for the sake of brevity, and for the sake of exposing the components used in previous applications of this method, we leave the details of these improvements as exercises for the reader.

\subsection{Proof of (\ref{Smin})} Returning to the setting of the proof of Lemma \ref{sqL2}, we consider $\alpha \in \mathfrak{m}$. By the pigeonhole principle, there exists $1\leq q \leq M^{7/4}$ and $(a,q)=1$ such that $$|\alpha-a/q| \leq \frac{1}{qM^{7/4}}\leq\frac{1}{q^2}. $$ Writing $\alpha=a/q+\beta$, if $q\leq M^{1/4}$, then we have from Lemma \ref{majform} that \begin{equation}\label{form23} S_M(\alpha)=q^{-2}G(a,q)\int_{0}^M\int_{0}^M e^{2\pi ih(x,y)\beta} dxdy+O_h(M^{3/2}).  \end{equation} If $q\leq \eta^{-1}$, then it must be the case that $|\beta|>(\eta^2 N)^{-1}$, since otherwise we would have $\alpha \in \mathfrak{M}$. In this case, recalling that $N\ll_h M^2$ and $\eta \gg_h \delta \geq N^{-1/20} \gg_h M^{-1/10}$, it follows from (\ref{form23}), Corollary \ref{vdc2}, and trivially bounding $G(a,q)$ by $q^2$ that $$|S_M(\alpha)| \ll M|\beta|^{-1/2}+O_h(M^{3/2}) \ll_h \eta M^2. $$ If $\eta^{-1}\leq q \leq M^{1/4}$, then by (\ref{form23}), Lemma \ref{gauss2}, and trivially bounding the double integral by $M^2$, we have $$S_M(\alpha) \ll_h M^2/q+O_h(M^{3/2}) \ll_h \eta M^2. $$ Finally, if $M^{1/4}\leq q \leq M^{7/4}$, then by Corollary \ref{weylbin2} we have $$|S_M(\alpha)| \ll_h M(M\log q+M^2/q+q\log q)^{1/2} \ll M^{15/8} \ll_h \eta M^2, $$ and (\ref{Smin}) is established in all cases. \qed

\noindent \textbf{Acknowledgements:} The author would like to thank Neil Lyall who co-authored the expository note \cite{LR}, in the context of squares and shifted primes, that served as a template for this paper.

\end{document}